\numberwithin{equation}{section}
\newtheorem{thm}{Theorem}[section]
\newtheorem{proposition}[thm]{Proposition}
\newtheorem{remark}[thm]{Remark}
\newtheorem{definition}[thm]{Definition}
\newtheorem{lemma}[thm]{Lemma}
\newtheorem{assumption}[thm]{Assumption}
\def\beq{\begin{equation}}
\def\eeq{\end{equation}}
\def\<{\langle}  \def\>{\rangle}
\newcommand{\R}{\mathbb{R}}
\newcommand{\bP}{\mathbb{P}}
\newcommand{\bE}{\mathbb{E}}
\newcommand{\cF}{{\cal F}}
\newcommand{\cL}{{\cal L}}
\begin{document}

\def\qed{{\hfill $\Box$ \bigskip}}

\title{\Large \bf $L\log L$ criterion for a class of  multitype superdiffusions with non-local branching mechanisms}

\author{{\bf Zhen-Qing Chen}
\quad {\bf Yan-Xia Ren}\thanks{The research of this author is supported by NNSFC (Grant No.  11731009 and 11671017).} \quad  \hbox{and} \quad {\bf Renming Song
}\thanks{Research supported in part by a grant from the Simons
Foundation (\#429343, Renming Song).}}

\date{}
\maketitle

\begin{abstract}
In this paper, we provide a pathwise spine decomposition for multitype superdiffusions with non-local branching mechanisms under a martingale change of measure.  As an application of this decomposition, we obtain
a necessary and sufficient condition (called the $L\log L$ criterion)
for the limit of the fundamental martingale to be non-degenerate.  This result complements the related results obtained in \cite{KLMR,KM,LRS09} for superprocesses with purely local branching mechanisms and in \cite{KP} for
super Markov chains.
\end{abstract}

\noindent {\bf AMS Subject Classifications (2000)}: Primary 60J80,
  60F15; Secondary 60J25

\medskip

\noindent\textbf{Keywords and Phrases:}
multitype superdiffusion;
non-local branching mechanism;
switched diffusion;
spine decomposition; martingale.

\begin{doublespace}

\section{Introduction}

\subsection{Previous results}

Suppose that $\{Z_n, n\ge 1\}$ is a Galton-Watson  process
with offspring distribution $\{p_n\}$.
That is, each particle lives for one unit of time; at the time of its death,
it gives birth to $k$ particles with probability $p_k$ for $k=0, 1, \cdots$;
and $Z_n$ is the total number of particles alive at time $n$.
Let $L$ be a random variable with distribution $\{p_n\}$
and   $m:=\sum^{\infty}_{n=1}np_n$ be the
expected number of
offspring per particle. Then $Z_n/m^n$ is a non-negative martingale.
Let $W$ be the limit of $Z_n/m^n$ as $n\to\infty$. Kesten and Stigum
proved in \cite{KS} that, when $1<m<\infty$ (that is, in the supercritical case),
$W$ is non-degenerate (i.\/e., not almost
surely zero) if and only if
\begin{equation}\label{LLogL-GW}
E(L\log ^+L)=\sum^{\infty}_{n=1}p_nn\log n<\infty.
\end{equation}
This result is usually called  the Kesten-Stigum $L\log L$
criterion. In \cite{AH76a}, Asmussen and Herring generalized this
result to the case of branching Markov processes under some
conditions.

In 1995, Lyons, Pemantle and Peres developed a martingale change of
measure method in \cite{LPP} to give a new proof for the
Kesten-Stigum $L\log L$ criterion for (single type) branching processes.
Later this approach was applied
to prove the $L\log L$ criterion for
multitype and general multitype branching processes in \cite{BK, KLPP}.

In \cite{LRS09}, the martingale change of measure method was used to prove
an $L\log L$ criterion for a class of superdiffusions.
In this paper, we will establish a pathwise spine decomposition for multitype
superdiffusions with purely non-local branching mechanisms.
Our non-local branching mechanisms are special in the sense
that the types of the offspring are different from their mother,
but their spatial locations at birth are the same as their mother's spatial location immediately before her death.
We will see below that, a multitype superdiffusion with
a purely non-local branching mechanism given by \eqref{psi} below can also be
viewed as a superprocess having
a switched diffusion  as its spatial motion and $\widehat \psi(x,i; \cdot)$ defined in \eqref{hat-psi}
as its (non-local) branching mechanism.
 Using
a non-local Feynman-Kac transform, we prove that,
under a martingale change of measure, the spine runs as a copy of
an $h$-transformed switched-diffusion, which is a new switched diffusion.
The non-local nature of the branching mechanism induces a different
kind of immigration--the \textit{switching-caused} immigration.
That is to say, whenever there is a switching of types, new immigration happens
and the newly immigrated particles choose their types according to
a distribution $\pi$.
The switching-caused immigration is a consequence of the non-local branching,
and it does not occur when the branching mechanism is purely local.
Note that in this paper we do not consider branching mechanism with a local term.
Note also that our non-local branching mechanism is special in the sense that
only the types, not the spatial  locations at birth, are different from the mother's.
It is interesting to consider superprocesses with more general
non-local branching mechanism and with local branching mechanism.
For this case, one can see the recent preprint \cite{RSY}, where the spine is a concatenation process.

Concurrently to our work, Kyprianou and  Palau \cite{KP} considered super
Markov chains with local and non-local branching mechanisms. Note that if
particles do not move in space, our model reduces to the model considered
in  \cite{KP} with purely non-local branching mechanism.
Kyprianou and  Palau  \cite{KP} also found that immigration happens when
particle jumps (they call this immigration {\it jump immigration}), which
corresponds to our switching-caused immigration.

\subsection {Model: multitype superdiffusions}

For integer $K\geq 2$, a $K$-type
superdiffusion is defined as follows. Let
$S:=\{1,2,\cdots,K\}$ be the set of types.
For each $k\in S$, ${\cL}_k$ is a second order
elliptic differential operator of divergence form:
\begin{equation}\label{def-L}{\cL}_k=\sum^d_{i,j=1}\frac{\partial}{\partial x_i}\left(a^{(k)}_{i,j}\frac{\partial}{\partial
x_j}\right)\quad\mbox{ on }\R^d, \end{equation} with
$A^k(x)=(a^k_{ij}(x))_{1\leq i, j\leq d}$ being a symmetric
 matrix-valued function on $\R^d$ that is uniformly elliptic and
 bounded:
$$ \Lambda_1|v|^2\le\sum^d_{i,j=1}a^k_{i,j}(x)v_iv_j\le \Lambda_2|v|^2
\qquad\mbox{ for all } v\in\R^d\mbox{ and }x\in \R^d
 $$ for some
positive constants $0<\Lambda_1\leq \Lambda_2<\infty$, where
$a^k_{ij}(x)\in C^{2,\gamma}(\R^d), 1\leq i, j\leq d$  for some
$\gamma \in (0, 1)$.
Throughout this paper,  for $i=1, 2, \cdots$, $ C^{i,\gamma}(\R^d)$ stands for the space of $i$ times continuously differentiable functions with all their $i$th order derivatives belonging to $C^\gamma(\R^d)$,
the space of $\gamma$-H\"older continuous functions on $\R^d$.

 Suppose that for each $i\in S$,
$\xi^i :=\{\xi^i_t, t\ge 0; \Pi^i_x, x\in \R^d\}$ is a diffusion process on $\R^d$  with generator ${\cL}_i$.
In this paper we will always assume that
 $D$ is a domain  of finite Lebesgue measure in $\mathbb{R}^d$.
 For $x\in D$, denote by $\xi^{i, D}:= \{ \xi^{i, D}_t, t\ge 0; \Pi^i_x, x\in D\}$ the subprocess of $\xi^i$
killed upon exiting $D$;
 that is,
$$
\xi^{i,D}_t=\left\{\begin{array}{ll} \xi^i_t & \mbox{ if } t<\tau^i_D,\cr
\partial, & \mbox{ if } t\ge \tau^i_D,\cr
\end{array}
\right.
$$
where $\tau^i_D =\inf\{t\ge0; \xi^{i}_t\notin D\}$ is the first exit time of
$D$ and $\partial$ is a cemetery point.

Let ${\cal M}_1(S)$
denote the set of all  probability measures on $S$,  and ${\cal M}_F(\mathbb{R}^d\times S)$ denote the space of finite measures on $D\times S$. For any measurable set $E$,
we use $B_b(E)$ (resp. $B^+_b(E)$) the family of
bounded (resp. bounded positive)  ${\cal B}(E)$-measurable functions on $E$.
Any function $f$ on $D$ is
automatically extended to
$D_\partial := D\cup \{\partial\}$ by setting
$f(\partial)=0$. Similarly, any function $f$ on $D\times S$ is
automatically extended to
$D_\partial \times S$ by setting
$f(\partial, i)=0, i\in S$.
If $f(t, x, i)$ is a function on $[0,+\infty)\times D\times S$, we say $f$ is \textit{locally bounded} if $\sup_{t\in [0,T]}\sup_{(x, i)\in D\times S}|f(t, x, i)|<+\infty$ for every finite $T>0$. For a function $f(s, x, i)$ defined on $[0,+\infty)\times D\times S$ and a number $t\ge 0$, we denote by $f_t(\cdot)$ the function $(x, i)\mapsto f(t, x, i)$.
For convenience we use the following convention
throughout this paper: For any probability measure $\bP$, we also use
$\bP$ to denote the expectation with respect to $\bP$. When there is
only one probability measure involved, we sometimes also use $\bE$ to
denote the expectation with respect to that measure.

We consider a multitype superdiffusion $\{\chi_t, t\ge 0\}$ on $D$, which is
a strong Markov process taking values in
${\cal M}_F(D\times S)$.
We can represent ${\chi}_t$ by $(\chi^1_t,\cdots, \chi^K_t)$ with $\chi^i_t \in {\cal M}_F (D)$
for  $1\leq i\leq K$.
For $f\in{B}^+_b(D\times S)$,
we often use the convention
$$
f(x)=(f(x,1),\cdots, f(x,K))=(f_1(x),\cdots, f_K(x)),
\quad x\in D,
$$
and $\langle f, \chi_t\rangle=\sum^K_{j=1}\langle f_j, {\chi}^j_t\rangle$ .
Suppose that $F(x, i; du)$ is a kernel from $D\times S$ to $(0,\infty)$ such
that, for each $i\in S$, the function
$$
m(x,i):=\int^\infty_0u F(x, i; du)
$$
is bounded on $D$.
Let $n$ be a bounded Borel function on $D\times S$
such that $n(x,i)\ge m(x, i)$ for every $(x, i)\in D\times S$, and
 $p^{(i)}_j(x)$, $i, j\in S$, be non-negative Borel functions on $D$ with
$\sum^K_{j=1}p^{(i)}_j(x)=1$. Define
$$
\pi(x,i;\cdot)=\sum^K_{j=1}p^{(i)}_j(x)\delta_{(x,j)}(\cdot),
$$
where $\delta_{(x,j)}$ denotes the unit mass at $(x,j)$.
Then $\pi(x,i;\cdot)$ is a Markov kernel on $D\times S$.
For any $f\in{B}^+_b(D\times S)$, we write
$\pi(x,i;f)=\sum^K_{j=1}p^{(i)}_j(x)f_j(x).$
Define
$$
\zeta(x,i;f)=n(x,i)\pi(x,i;f)+
\int^\infty_0\left(1-e^{-u\pi(x,i;f)}-u\pi(x,i;f)\right)F(x,i;du).
$$
Note that we can rewrite  $\zeta(x,i;f)$ as
 $$
\zeta(x,i;f)=\widetilde n(x,i)\pi(x,i;f)+
\int^\infty_0\left(1-e^{-u\pi(x,i;f)}\right)F(x,i;du),
$$
where
\begin{equation}\label{def-tilde-d}
\widetilde n(x,i):=n(x,i)-m(x, i)\ge 0.
\end{equation}
 $\zeta(x,k;f)$ serves as the non-local branching mechanism, which is a
 special form of \cite[(3.17)]{DGL}  with $d$ (corresponding to $n$ in the present paper) and $n$ (corresponding to
 $F$ in the present paper) independent of $\pi$, and $G(x, i; d\pi)$ being the unit mass at some
 $\pi(x,i;\cdot)\in {\cal M}_1(S)$,
 that is, the non-locally displaced offspring born at $(x,i)\in D\times S$ choose their types independently
according to the (non-random) distribution $\pi(x,i; \cdot )$.
Suppose $b(x,i)\in B^+_b(D\times S)$.
Put
 \begin{equation}\label{psi}
 \psi(x,i;f)=b(x,i)\left(f_i(x)-\zeta(x,i;f)\right),   \quad (x,i)\in D\times S, \
 f\in B^+_b(D\times S).
\end{equation}
 Without loss of generality, we suppose that $p^i_i(x)=0$ for
 all $(x,i)\in D\times S$, which means that $\psi$ is a purely
 non-local branching mechanism.
The Laplace-functional of $\chi$ is given by
\begin{equation}\label{Laplace}
\bP_{\mu}\exp\langle -f, {\chi}_t\rangle =\exp\langle-
u^f_t(\cdot),\mu\rangle,
\end{equation}
where $u^f_t(x, i)$ is the unique locally bounded positive solution to
the evolution equation
\begin{equation}\label{int}\begin{array}{rl}
u^f_t(x,i)+&\displaystyle\Pi^i_{x}\left[
\int^t_0\psi(
\xi^{i,D}_s, i; u^f_{t-s})ds\right]=\Pi^i_xf_i(\xi^{i, D}_t), \quad\mbox{for }
t\ge 0,\end{array}
\end{equation}
where we used the convention that $u^f_{t}(x)=(u^f_{t}(x, 1), \cdots,
u^f_{t}(x, K))$. This process is called an
$(({\cL}_1,\cdots {\cL}_K),   {\bf \psi})$-multitype superdiffusion in $D$.
 It is well known (see, e.g., \cite{F}) that for any non-negative bounded function $f$ on $D\times S$,
 the $u^f_t(x, i)$ in \eqref{int} is a locally bounded positive solution
 to  the following
 system of partial differential equations:
   for each $i\in S$,
 \begin{equation}\label{pde}
 \left\{
\begin{array}{rlll}&\displaystyle\frac{\partial{u^f(t,x, i)}}{\partial{t}}
& =& {\cL}_i (t,x, i) - \psi(x, i; u^f_t )
\quad (t, x)\in (0, \infty)\times D\\
&\displaystyle u^f(0, x, i)&=& f_i(x),
 \quad x\in D\\
& \displaystyle u^f(t, x, i)&=& 0 \quad
 (t, x)\in (0, \infty)\times \partial D.
\end{array}\right.
\end{equation}

Multitype superdiffusions
can be obtained as
a scaling limit of a  sequence of multitype branching diffusions.
See \cite{DGL} for details. The multitype superdiffusion $\chi$
considered in this paper are the scaling limits of multitype branching diffusions
whose types can change only at branching times.

Define
\begin{equation}\label{def-m}
r_{il}(x)=n(x,i)p^{(i)}_l(x)\quad  x\in D,\, i,l\in S.
 \end{equation}
Let $v(t, x,i)=\bP_{\delta_{(x,i)}}\langle f, \chi_t\rangle$.
Using \eqref{Laplace} and \eqref{int}, we see that for  all $(t, x, i)\in (0, \infty)\times D\times S$,
\begin{equation}\label{mean-int}
\begin{array}{rl}
v_t(x,i)=&\displaystyle\Pi^i_xf_i(\xi^{i, D}_t)+\Pi^i_{x}
\int^t_0b(\xi_s, i)
\left(\sum^K_{l=1}r_{il}(\xi^{i,D}_s)
v_{t-s}(\xi^{i,D}_s,l)-v_{t-s}(\xi^{i,D}_s, i)\right)ds.\end{array}
\end{equation}
 Then $v(t,x,i)$ is the unique locally bounded solution
to the following linear system (see, e.g.,  \cite{F}):
for each $i\in S$,
\begin{equation}\label{pde-mean}
\left\{
\begin{array}{rlll}&\displaystyle\frac{\partial{v(t,x, i)}}{\partial{t}}&=&
{\cL}_iv(t,x, i)+b(x, i)\sum^K_{l=1}(r_{il}(x)-\delta_{il})v(t,x,l),
 \quad (t, x)\in (0, \infty)\times D\\
&\displaystyle v(0, x, i)&=&f_i(x),
 \quad x\in D\\
&\displaystyle v(t, x, i)&=& 0, \quad
(t, x)\in (0, \infty)\times \partial D.
 \end{array}\right.
\end{equation}
Letting ${\bf v}(t, x)=(v(t,x, 1),\cdots, v(t,x, K))^T,$  we can
rewrite the partial differential  equations in \eqref{pde-mean} as
\begin{equation}\label{pde-mean2}
\frac{\partial}{\partial t}
{\bf v}(t,x)={\cal L}{\bf v}(t,x)+
B(x) \cdot ( R(x)-I){\bf v}(t,x),
\end{equation}
where
$${\cal L}=\left(\begin{array}{llll}{\cL}_1\quad &0&\cdots&\, 0\\
0\quad &{\cL}_2\,&\cdots&\, 0\\
\vdots\quad&\vdots\,&\ddots&\, \vdots\\
0\quad &0&\cdots&\, {\cL}_K\end{array}\right),
$$
$$
B(x)=\left(\begin{array}{llll}b(x, 1)\quad &0&\cdots&\,0\\
0\quad &b(x, 2)\,&\cdots&\, 0\\
\vdots\quad&\vdots\,&\ddots&\, \vdots\\
0\quad &0&\cdots&\, b(x, K)\end{array}\right)
$$
and
$$
R(x)=\left(\begin{array}{llll}r_{11}(x)\quad &r_{12}(x)&\cdots&\, r_{1d}(x)\\
r_{21}(x)\quad &r_{22}(x)\,&\cdots&\,r_{2d}(x)\\
\vdots\quad&\vdots\,&\ddots&\, \vdots\\
r_{K1}(x)\quad &r_{K2}(x)&\cdots&\, r_{KK}(x)\end{array}\right).
$$
In this paper we assume that  $B(x)\cdot R(x)$ is symmetric,
that is to say,
\begin{equation}\label{symm}
b(x,i)n(x,i)p^{(i)}_j(x)=b(x,j)n(x,j)p^{(j)}_i(x),\quad\mbox{ for all } i,j\in S, x\in D.\end{equation}
We assumed the symmetry of $B(x)\cdot R(x)$ and the symmetry of the operators ${\cL}_k$ for simplicity. If
the ${\cL}_k$'s are of
 non-divergence form and $B(x)\cdot R(x)$ is not symmetric,
 we can use the intrinsic ultracontractivity introduced in \cite{KS1}.

Note that
\begin{equation}\label{decom-M}
 R(x)-I=R(x)-N(x)+(N(x)-I),
\end{equation}
 where
$$
N(x)=\mbox{diag}\left(n(x,1),\cdots n(x,K)\right),\quad x\in D.
$$
Then by \eqref{def-m} and \eqref{decom-M},
\begin{equation}\label{decom-M'}
B(x)\cdot (R(x)-I)=\widehat B(x)\cdot \left( P(x)-I\right)
+B(x)\left(N(x)-I\right),\end{equation}
where
$$
\widehat B(x)
=\mbox{diag}\left(b(x, 1)n(x,1),\cdots, b(x, K)n(x,K) \right),
$$
and
$$
P(x)=\left( p_{ij}(x)\right)_{{i,j}\in S},\quad p_{ij}(x)=p^{(i)}_j(x).
$$
Put $Q(x)=(q_{ij}(x))_{{i,j}\in S}=\widehat B(x)\cdot (P(x)-I)$. We will assume that the matrix $Q$
is irreducible
on $D$ in the sense
that for any two distinct $k, l\in S$,
there exist $k_0, k_1, \cdots, k_r\in S$ with $k_i\neq k_{i+1}$, $k_0=k$, $k_r=l$
 such that $\{x\in D: q_{k_ik_{i+1}}(x)>0\}$ has positive Lebesgue
measure for each $0\leq i \leq  r-1$.
Let $\{(X_t, Y_t), t\ge 0\}$ be a switched diffusion
with generator ${\cal A}:= \cL + Q(x)$  killed upon exiting from $D\times S$,
and  $\Pi_{(x,i)}$  be its law starting from $(x,i)$.
$\{(X_t, Y_t), t\ge 0\}$ is a symmetric Markov process on
$D\times S$ with respect to $dx\times di$, the product of the
Lebesgue measure on $D$  and the counting measure on $S$.

Define
\begin{equation}\label{def-zeta1}
\zeta_1(x,i;f)=n(x,i)\pi(x,i;f)=n(x,i)\sum^K_{i=1}p^{(i)}_l(x)f_l(x)=
\sum^K_{l=1}r_{il}(x)f_l(x)
\end{equation}
and
\begin{equation}\label{def-zeta2}\zeta_2(x,i;f)=\int^\infty_0\left(1-e^{-u\pi(x,i;f)}-u\,\pi(x,i; f)\right)F(x,i;du).\end{equation}  Then \begin{equation}\label{zeta-1+2}\zeta(x,i;f)=\zeta_1(x,i;f)+\zeta_2(x,i;f).\end{equation}
Letting
$$
{u}^f(t,
x)=(u^f(t,x, 1),\cdots, u^f(t,x, K))^T \mbox{ and }
{\zeta}_2(x,f)=(\zeta_2(x,1; f),\cdots, \zeta_2(x,K;f))^T,
$$
in view of \eqref{psi}  we can
rewrite the partial differential
equation in \eqref{pde} as
\begin{equation}\label{pde2}
\frac{\partial}{\partial t}
{ u}^f(t,x)={\cal L}{u}^f(t,x)+
B(x) \cdot \left( R(x)-I\right){u}^f(t,x)+B(x) \cdot{\zeta}_2(x, u^f_t),
\end{equation}
which, by \eqref{decom-M}, is equivalent to
\begin{align}\label{pde3}
\frac{\partial}{\partial t}{ u}^f(t,x)&=
\ {\cal L}{u}^f(t,x)+\widehat B(x)\cdot
\left(P(x)-I\right){u}^f(t,x)\nonumber\\
&\quad +\ B(x)\cdot\left[(N(x)-I){u}^f(t,x)+{\zeta}_2(x, u^f_t)\right].
\end{align}
For $ f\in B^+_b(\R^d\times S)$, define
 \begin{equation}\label{hat-psi}
 \widehat \psi(x,i;f):=-b(x, i)n(x,i)f_i(x)+b(x, i)(f_i(x)-\zeta_2(x,i;f)).
\end{equation}
Then applying the strong Markov property of the switched diffusion process $(X, Y)$
at its first switching time and using the approach from \cite{CZ} (see in particular
p.296, Proposition 2.2 and Theorem 2.5 there) and \cite{F},
 one can verify using  \eqref{pde3}
that $u^f_t(x, i)$ satisfies
 \begin{equation}\label{int-equi}\begin{array}{rl}
u^f_t(x,i)+&\displaystyle\Pi_{(x,i)}\left[
\int^t_0\widehat\psi(
X_s,  Y_{s}; u^f_{t-s})ds\right]=\Pi_{(x,i)}f(X_t,  Y_t), \quad
t\ge 0.\end{array}
\end{equation}
This means that $\{\chi_t, t\ge 0\}$ can be viewed as a superprocess
 with the switched diffusion $(X_t,  Y_t)$ as its spatial motion on the space
$D\times S$ and $\widehat \psi(x,i; \cdot)$
as its (non-local) branching mechanism.
 See \cite{DKS} for a definition of superprocesses with
general non-local branching mechanisms.

\section{Main result}

It follows from \cite[Theorem 5.3]{CZ} that
the switched diffusion  $\{(X_t, Y_t), t\ge 0\}$ in $D$ has a transition density
$p(t, (x, k), (y, l))$, which is positive for all $x, y\in D$
and $k, l\in S$.  Furthermore, for any $k, l\in S$ and
$t>0$, $(x, y)\mapsto p(t, (x, k), (y, l))$ is continuous.
Let $\{P_t: t\ge 0\}$ be the transition semigroup of
$\{(X_t, Y_t), t\ge 0\}$.
For any $t>0$, $P_t$ is a compact self-adjoint operator.
Let $\{e^{\nu_kt}: k=1, 2,\cdots\}$ be all the eigenvalues of
$P_t$ arranged in decreasing order, each repeated according to its multiplicity. Then
$\lim_{k\to \infty} \nu_k= -\infty$
and the corresponding
eigenfunctions $\{\varphi_k\}$ can be chosen so that they form an orthonormal
basis of $L^2(D\times S, dx\times di)$.  All the eigenfunctions $\varphi_k$ are continuous.
The eigenspace corresponding to $e^{\nu_1t}$ is of dimension 1 and $\varphi_1$ can chosen to be strictly positive.

Let $\{P^{{\cal A}+B\cdot (N-I)}_t,t\ge 0\}$
be the Feynman-Kac semigroup defined by
$$
P^{{\cal A}+B\cdot (N-I)}_tf(x,i)
:=\Pi_{(x,i)}\left[f(X_t, Y_t)\
\exp\left(\int^t_0b(X_s,  Y_s)
(n(X_s,  Y_s)-1) ds\right)\right].
$$
Then, by \eqref{decom-M}, $P^{{\cal A}+B\cdot (N-I)}_tf(x,i)$
is the unique solution to \eqref{pde-mean} and thus
\begin{equation}\label{expX}
\bP_{\delta_{(x,i)}}\langle f, \chi_t\rangle=
P^{{\cal A}+B\cdot (N-I)}_tf(x,i).
\end{equation}

Under  the assumptions above,
$P^{{\cal A}+B\cdot (N-I)}_t$ admits a density
$\widetilde{p}(t,(x,i),(y,j))$, which is jointly continuous in $(x,y)\in D\times D$,
such that
$$
P^{{\cal A}+B\cdot (N-I)}_tf(x,i)
=\sum_{j\in S}\int_D\widetilde{p}(t, (x,i), (y,j))f(y,j)dy,
$$
for every $f\in{\cal B}^+_b(D\times S).$
$\{P^{{\cal A}+B\cdot (D-I)}_t, t\ge 0\}$ can be extended to a
strongly continuous semigroup
on $L^2(D\times S, dx\times di)$. The semigroup
$\{ P^{{\cal A}+B\cdot (N-I)}_t, t\ge 0\}$
is symmetric in $L^2(D\times S, dx\times di)$, that is
$$
\sum_{i\in S}\int_Df(x,i)P^{{\cal A}+B\cdot (N-I)}_tg(x,i)dx
=\sum_{i\in S}\int_Dg(x,i) P^{{\cal A}+B\cdot (N-I)}_tf(x,i)dx
$$
for $f,g\in
L^2(D\times S, dx\times di).$
For any $t>0$, $P^{{\cal A}+B\cdot (N-I)}_t$
is a compact self-adjoint operator.
The generator of
the semigroup $\{P^{{\cal A}+B\cdot (N-I)}_t\}$
is ${\cal A}+B\cdot (N-I)={\cal L}+B\cdot (R-I)$.

Let $\{e^{\lambda_kt}: k=1, 2,\cdots\}$ be all the eigenvalues of
$P^{{\cal A}+B\cdot (N-I)}_t$ arranged in decreasing order,
each repeated according to its multiplicity.
Then $\lim_{k\to \infty} \lambda_k = -\infty$
and the corresponding
eigenfunctions $\{\phi_k\}$ can be chosen so that they form an orthonormal
basis of $L^2(D\times S, dx\times di)$.  All the eigenfunctions $\phi_k$ are continuous.
The eigenspace corresponding to $e^{\lambda_1t}$ is of dimension 1 and $\phi_1$ can chosen to be strictly positive. For simplicity, in the remainder of this paper, we
will $\phi_1$ as $\phi$.

Throughout this paper we assume that $\{\chi_t, t\ge 0\}$ is supercritical
and $\phi$ is bounded on $D\times S$; that is, we assume the following.

\begin{assumption}\label{assume1}
$\lambda_1>0$ and its corresponding positive eigenfunction $\phi$ is bounded.
\end{assumption}

Define
\begin{equation}\label{e:dofRr}
R^\phi(x):=\left(r^\phi_{ij}(x)\right),
\quad r^\phi_{ij}(x):=r_{ij}(x)\frac{\phi(x, j))}{\phi(x, i)}
=n(x,i)\frac{p^{(i)}_j(x)\phi(x, j)}{\phi(x, i)}
\end{equation}
and
\begin{equation}\label{e:dofpi(phi)}
\pi(\phi)(x,i):=\pi(x,i;\phi)=
\sum^K_{j=1}p^{(i)}_j(x)\phi(x,j),\quad (x,i)\in D\times S.
\end{equation}

 Let $\{{\cal E}_t; t\geq 0\}$
be the minimum augmented filtration generated by
the switched diffusion   $(X, Y)$ in $D$.
 Define a
measure $\Pi_{(x,i)}^\phi$ by
\begin{equation}\label{Martingale switched diffusion}
\begin{array}{rl}
\displaystyle\frac{d\Pi_{(x,i)}^\phi}{d\Pi_{(x,i)}}\Big|_{\mathcal{E}_t}= &\displaystyle e^{-\lambda_1t}\frac{\phi(X_t, Y_{t})}{\phi(x,i)}\exp\left(\int^t_0b(X_s, Y_{s})(n(X_s, Y_s)
-1){\rm d}s\right).
\end{array}
\end{equation}
Then $\{(X, Y), \Pi_{(x,i)}^\phi\}$ is a conservative Markov process which is symmetric with respect to
the measure $\phi^2(x, i)dx\times di$.
The process $\{(X, Y), \Pi_{(x,i)}^\phi\}$ has
a transition density $p^\phi(t, (x,i),(y,j))$ with respect to $dy\times dj$ given by
$$
p^\phi(t,(x,i),(y,j))=\frac{e^{-\lambda_1t}\phi(y,j)}{\phi(x,i)}\ \widetilde{p}(t, (x, i),
(y, j)),\quad (x,i)\in D\times S.
$$
Let $\{P^{\phi}_t: t\ge 0\}$ be the transition semigroup of
$(X,  Y)$ under $\Pi_{(x,i)}^\phi$.
Then $\phi^2$ is
the unique invariant probability density of $\{P^{\phi}_t: t\ge 0\}$, that is,
for any $f\in B^+_b(D\times S)$,
$$
\sum^K_{i=1}\int_D\phi^2(x,i)P^{\phi}_tf(x,i){\rm d}x=\sum^K_{i=1}\int_Df(x,i)\phi(x,i)^2{\rm d}x.
$$
Since the infinitesimal generator of
$\{(X, Y), \Pi_{(x,i)}\}$
is ${\cal L}+\widehat B(x)\cdot(P(x)-I)$
with zero Dirichlet boundary condition on $\partial D \times S$,
it follows from  \cite[Theorem 4.2]{PR} that the generator of  $\{ (X, Y), \Pi^\phi_{(x,i)}\}$ is
\begin{eqnarray*}
&&\frac{1}{{\bf \phi}}\left[\cL({\bf u\phi})+\widehat B(x)\cdot( P(x)-I)({\bf{u\phi}})-{\bf u}(\cL({\bf \phi})+\widehat B(x)\cdot( P(x)-I){\bf{\phi}})\right]\\
&=&
\frac{1}{{\bf \phi}}\left[\cL({\bf u\phi})+\widehat B(x)\cdot( P(x)-I)({\bf{u\phi}})+B(x)\cdot(N(x)-I)({\bf{u\phi}})-\lambda_1{\bf u}\phi\right]\\
&=&\frac{1}{{\bf \phi}}\left[\cL({\bf u\phi})+
B(x)\cdot(R(x)-I)({\bf{u\phi}})\right]-\lambda_1{\bf u}\\
&=&{\cal L}^\phi {\bf u}+B(x)\cdot (R^\phi(x)-I){\bf u}-\lambda_1{\bf u},
 \end{eqnarray*}
where in
  the first equality above we used the fact that $\phi$ is an eigenfunction of
$P^{{\cal A}+B\cdot (N-I)}_t$  and \eqref{decom-M'}.

Note that
$$
\begin{array}{rll}\displaystyle B(x)\cdot (R^\phi-I)-\lambda_1
&=&\displaystyle\mbox{diag}\left(\frac{bn\pi(\phi)}{\phi}(x,1),\cdots \frac{bn\pi(\phi)}{\phi}(x,K)\right)(\widetilde P(x)-I)\\
&&\displaystyle+B(x)\left[\mbox{diag}\left(\frac{n\pi(\phi)}{\phi}(x,1),\cdots \frac{n\pi(\phi)}{\phi}(x,K)\right)-I\right]-\lambda_1
\\&=&\displaystyle\mbox{diag}\left(\frac{bn\pi(\phi)}{\phi}(x,1),\cdots \frac{bn\pi(\phi)}{\phi}(x,K)\right)(\widetilde P(x)-I).
\end{array}
$$
Thus the generator of
$\{(X, Y), \Pi^\phi_{(x,i)}\}$ is
\begin{equation}\label{transformed swithed diffu}
{\cal L}^\phi+\mbox{diag}\left(\frac{bn\pi(\phi)}{\phi}(x,1),\cdots \frac{bn\pi(\phi)}{\phi}(x,K)\right)(\widetilde P(x)-I)
\end{equation}
which is the generator of a new switched diffusion,
where
$$
{\cal L}^\phi=\left(\begin{array}{llll}
\cL^{\phi(\cdot, 1)}_1\quad &0&\cdots&\, 0\\
0\quad & \cL^{\phi(\cdot,2)}_2\,&\cdots&\, 0\\
\vdots\quad&\vdots\,&\ddots&\, \vdots\\
0\quad &0&\cdots&\, \cL^{\phi(\cdot, K)}_K\end{array}\right),
$$
$$
\cL^{\phi(\cdot, k)}_ku_k(x)=\frac{1}{\phi(x, k)}{\cL}_k\left(\phi(x, k)u_k(x)\right),
$$
$$
\widetilde P(x)=\left(\widetilde p_{ij}(x)\right)_{i,j\in S},
$$
and
$$
\widetilde p_{ij}(x)=\frac{\phi(x,i)}{n(x,i)\pi(x,i;\phi)}r^\phi_{ij}(x)
=\frac{p^{(i)}_j(x)\phi(x, j)}{\pi(x, i;\phi)},\quad i,j\in S, x\in D.
$$

For any measure $\mu$ on $D\times S$ such that $\langle\phi,\mu\rangle<\infty$, define
$$\Pi^\phi_{\phi\mu}=\frac{1}{\langle\phi,\mu\rangle}\int\phi(x,i)\Pi^\phi_{(x,i)}d\mu.$$
By \eqref{transformed swithed diffu}, the jumping intensity of $(X, Y)$ under
$\Pi^\phi_{\phi\mu}$  is $\frac{bn\pi(\phi)}{\phi}(x, i)$ at $(x,i)\in D\times S.$

\bigskip

Throughout this paper we assume that

\begin{assumption}\label{assume2} The first eigenfunction $\phi$ is bounded on $D\times S$.
The semigroup $\{P_t:t\ge 0\}$ is intrinsically ultracontractive, that is, for any $t>0$, there exists $c_t>0$ such that
$$
p(t, (x, k), (y, l))\le c_t\phi(x, k)\phi(y, l), \qquad x, y\in D, k, l\in S.
$$
\end{assumption}

It follows from \cite[Theorem 3.4]{DS} that the semigroup
$\{P^{{\cal A}+B\cdot (N-I)}_t: t\ge 0\}$ is also intrinsically ultracontractive,
that is, for any $t>0$, there exists $c_t>0$ such that
$$
\widetilde{p}(t, (x, k), (y, l))\le c_t\phi(x, k)\phi(y, l), \qquad x, y\in D, k, l\in S.
$$
As a consequence, one can easily show (see, for instance, \cite{Ban}) that
for any $t_0>0$, there exists $c>0$ such that for all $t\ge t_0$,
$$
\left|\frac{e^{-\lambda_1t}\widetilde{p}(t, (x, k), (y, l))}{\phi(x, k)\phi(y, l)}-1\right|\le
ce^{(\lambda_2-\lambda_1)t},
 \qquad x, y\in D, k, l\in S.
$$
Hence for any $\delta\in (0, 1)$,  there exists $t_0>0$ such that for
all $t\ge t_0$,
$$
\left|\frac{e^{-\lambda_1t}\widetilde{p}(t, (x, k), (y, l))}{\phi(x, k)\phi(y, l)}-1\right|\le \delta, \qquad x, y\in D, k, l\in S.
$$
Thus for any $f\in B_b(D\times S)$,
$t>t_0$ and $(x,i)\in D\times S$,
\begin{equation}\label{IU'}
\left| P^\phi_tf(x,i)-\int_{D\times S}f(y,j)\phi(y,j)^2 dydj \right|\le \delta \int_{D\times S}f(y,j)\phi(y,j)^2dydj.
\end{equation} It follows from \eqref{IU'} that
for any $f\in B^+_b(D\times S)\cap L^1(\phi^2(x,i) \, dx\times di)$,
$t>t_0$ and $(x,i)\in D\times S$,
\begin{equation}\label{IU''}
(1-\delta)\int_{D\times S}f(y,j)\phi(y,j)^2dydj\le P^\phi_tf(x,i)\le (1+\delta)\int_{D\times S}f(y,j)\phi(y,j)^2dydj.
\end{equation}

\begin{lemma}\label{martingale}
Define
\begin{equation}\label{martingale-1}
W_t(\phi):=e^{-\lambda_1 t}\langle \phi, \chi_t\rangle.
\end{equation}
 Then $\{W_t(\phi), t\ge 0\}$ is a non-negative $\bP_\mu$-martingale  for each nonzero $\mu\in M_F(D\times S)$  and
therefore there exists a limit $ W_{\infty}(\phi)\in[0,\infty)$
$\bP_{\mu}$-a.s.
\end{lemma}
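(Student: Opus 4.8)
The proof will rest on three ingredients already established above: the first-moment formula \eqref{expX}, the fact that $\phi$ is a genuine (pointwise) eigenfunction of $P^{{\cal A}+B\cdot (N-I)}_t$ with eigenvalue $e^{\lambda_1 t}$, and the boundedness of $\phi$ from Assumption \ref{assume1}; together with the (measure-valued) Markov property of $\{\chi_t, t\ge 0\}$. Non-negativity of $W_t(\phi)$ is immediate, since $\phi>0$ on $D\times S$ and $\chi_t$ is a finite measure.

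First I would record integrability. Using \eqref{expX} and its linear extension to an arbitrary finite starting measure (a consequence of the branching/superposition property of the superprocess), for any nonzero $\mu\in {\cal M}_F(D\times S)$ one has
$$\bP_\mu\langle\phi,\chi_t\rangle=\langle P^{{\cal A}+B\cdot (N-I)}_t\phi,\mu\rangle=e^{\lambda_1 t}\langle\phi,\mu\rangle<\infty ,$$
the finiteness being clear because $\phi$ is bounded and $\mu$ is finite. Hence $W_t(\phi)\in L^1(\bP_\mu)$ for every $t\ge 0$.

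Next, for the martingale property, let $\{\cF_t,t\ge 0\}$ be the natural filtration of $\chi$ and fix $0\le s\le t$. Conditioning at time $s$ via the Markov property and applying the first-moment identity with starting measure $\chi_s$,
$$\bP_\mu\big[\langle\phi,\chi_t\rangle\,\big|\,\cF_s\big]=\bP_{\chi_s}\langle\phi,\chi_{t-s}\rangle=\langle P^{{\cal A}+B\cdot (N-I)}_{t-s}\phi,\chi_s\rangle=e^{\lambda_1(t-s)}\langle\phi,\chi_s\rangle,\qquad \bP_\mu\text{-a.s.}$$
Multiplying both sides by $e^{-\lambda_1 t}$ gives $\bP_\mu[W_t(\phi)\mid\cF_s]=e^{-\lambda_1 s}\langle\phi,\chi_s\rangle=W_s(\phi)$, so $\{W_t(\phi),t\ge 0\}$ is a non-negative $\bP_\mu$-martingale. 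The existence of an a.s. limit $W_\infty(\phi)\in[0,\infty)$ is then the standard martingale convergence theorem for non-negative martingales.

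The only step that deserves attention is the passage from the one-point identity \eqref{expX} to $\bP_{\nu}\langle\phi,\chi_r\rangle=\langle P^{{\cal A}+B\cdot (N-I)}_r\phi,\nu\rangle$ for a general finite measure $\nu$ (used above with $\nu=\chi_s$); this is obtained from the branching property of the multitype superdiffusion, and it also uses that $\phi$ is an honest pointwise eigenfunction of the semigroup rather than merely an $L^2$-eigenfunction, a point already dealt with in the discussion preceding Assumption \ref{assume1}. Apart from this, the argument is routine and I do not expect any genuine obstacle.
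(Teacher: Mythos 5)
Your proof is correct and follows essentially the same route as the paper's: apply the Markov property of $\chi$, invoke \eqref{expX}, and use that $\phi$ is an eigenfunction of $P^{{\cal A}+B\cdot(N-I)}_t$ with eigenvalue $e^{\lambda_1 t}$ to identify the conditional expectation with $W_s(\phi)$. Your explicit remarks on $L^1$-integrability and on extending \eqref{expX} from $\delta_{(x,i)}$ to general finite $\nu$ via the branching property are points the paper leaves implicit, but they do not change the argument.
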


\begin{proof} By the Markov property of $\chi$ and \eqref{expX},  and using the fact that $\phi$ is an eigenfunction corresponding $\lambda_1$,
 we get that for any nonzero $\mu\in M_F(D\times S)$,
 \begin{eqnarray*}
\bP_{\mu} \left[W_{t+s}(\phi) \big| \cF_t\right] &=& \frac{1}{\langle\phi,
\mu\rangle}e^{-\lambda_1 t} \bP_{\chi_t} \left[e^{-\lambda_1 s} \langle \phi, \chi_s\rangle\right]\\
& =&\frac{1}{\langle\phi, \mu\rangle}e^{-\lambda_1 t}
\left\langle  e^{-\lambda_1 s}
P^{{\cal A}+B\cdot (N-I)}_s\phi, \, \chi_t\right\rangle \\
&=&\frac{1}{\langle\phi, \mu\rangle}e^{-\lambda_1 t} \langle\phi, \, \chi_t\rangle =
W_t(\phi).
\end{eqnarray*}
This proves that $\{W_t(\phi), t\ge 0\}$ is a non-negative
$\bP_\mu$-martingale and so it has an almost sure limit
$W_\infty(\phi)\in[0,\infty)$ as $t\to \infty$.
\qed\end{proof}

We define a new kernel  $F^{\pi(\phi)}(x,i; dr)$ from $D\times S$ to $(0,\infty)$ such that for any nonnegative measurable function $f$ on $(0,\infty),$
$$\int^\infty_0f(r)F^{\pi(\phi)}(x,i;dr)=\int^\infty_0f(\pi(x,i;\phi)r)F(x,i;dr),\quad (x,i)\in D\times S.$$

Define
\begin{equation}\label{def-l}
l(x,i):=\int_0^\infty r\log^+(r)F^{\pi(\phi)}(x,i; dr).
\end{equation}

The main result of this paper is  the following.

\begin{thm}\label{maintheorem} Suppose that
$\{\chi_t; t\ge 0\}$ is a multitype superdiffusion and
that Assumptions \ref{assume1} and \ref{assume2}   hold.
Assume that  $\mu\in M_F(D\times S)$ is non-trivial.
 Then $W_\infty(\phi)$ is non-degenerate under $\bP_\mu$
if and only if
 \begin{equation}\label{LlogL-BH}
\int_{D}{\phi}(x,i)b(x,i)l(x,i)dx<\infty  \quad \hbox{for every }  i\in S,
 \end{equation}
where $l$ is defined in \eqref{def-l}.
Moreover, when \eqref{LlogL-BH} is satisfied,  $W_t(\phi)$
converges to $W_\infty(\phi)$ in $L^1$ under $\bP_\mu$.
\end{thm}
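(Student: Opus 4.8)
The plan is to run the Lyons--Pemantle--Peres argument, as in \cite{LPP,LRS09,KP}, on the switched-diffusion representation of $\chi$. Fix a non-trivial $\mu\in M_F(D\times S)$ and, using the martingale $W_t(\phi)$ of Lemma~\ref{martingale}, define a probability measure $\mathbb{Q}_\mu$ by $\frac{d\mathbb{Q}_\mu}{d\bP_\mu}\big|_{\cF_t}=\langle\phi,\mu\rangle^{-1}W_t(\phi)$. By the classical measure-theoretic dichotomy for a nonnegative-martingale change of measure (\cite{LPP}): if $\mathbb{Q}_\mu(W_\infty(\phi)<\infty)=1$ then $W_t(\phi)\to W_\infty(\phi)$ in $L^1(\bP_\mu)$, so $\bP_\mu[W_\infty(\phi)]=\bP_\mu[W_0(\phi)]=1$ and $W_\infty(\phi)$ is non-degenerate; while if $\mathbb{Q}_\mu(W_\infty(\phi)=\infty)=1$ then $\mathbb{Q}_\mu\perp\bP_\mu$ on $\cF_\infty$ and $W_\infty(\phi)=0$ $\bP_\mu$-a.s. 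A $0$--$1$ law for the $\mathbb{Q}_\mu$-tail event $\{W_\infty(\phi)<\infty\}$ (which, together with the irreducibility of $Q$, also forces the answer to be the same for every non-trivial $\mu$) shows these two alternatives are exhaustive. Hence the theorem reduces to showing that $\mathbb{Q}_\mu(W_\infty(\phi)<\infty)=1$ precisely when \eqref{LlogL-BH} holds, and $=0$ otherwise; the $L^1$-convergence claim is then automatic.

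\textbf{Spine decomposition and reduction to a series.} Under $\mathbb{Q}_\mu$ the superdiffusion admits a pathwise spine decomposition: an immortal particle $\{(X_t,Y_t)\}$ running as the $h$-transformed switched diffusion with generator \eqref{transformed swithed diffu} and initial law $\Pi^\phi_{\phi\mu}$, dressed with two kinds of immigration of independent copies of $\chi$ --- a ``continuous'' immigration generated by the linear part $\widetilde n$ of $\psi$, and, because the mechanism is non-local, the \emph{switching-caused} immigration: at the $k$-th switching time $\sigma_k$ of the spine an independent copy $\chi^{(k)}$ of $\chi$ is planted at time $\sigma_k$ from $\Delta_k\,\delta_{(X_{\sigma_k},J_k)}$, where, conditionally on the spine, $\Delta_k$ is drawn from the jump law carried by $F^{\pi(\phi)}$ (precisely the kernel recording $\phi$-weighted catastrophe sizes after the type-displacement $\pi$) and $J_k$ is the post-jump type. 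Since $W_t(\phi)$ is a $\bP_\mu$-martingale, letting $t\to\infty$ in the decomposition gives, $\mathbb{Q}_\mu$-a.s., $W_\infty(\phi)=\sum_k V_k+\mathcal{R}$, where $V_k=e^{-\lambda_1\sigma_k}\widetilde W^{(k)}_\infty$ is the contribution of the $k$-th immigrant ($\widetilde W^{(k)}_\infty$ being the martingale limit of $\chi^{(k)}$ in its own clock) and $\mathcal{R}$ collects the spine and continuous-immigration terms; boundedness of $\widetilde n,b,n,\phi$ together with $|D|<\infty$ makes the $\mathbb{Q}_\mu$-conditional mean of $\mathcal{R}$ finite, so $\mathcal{R}<\infty$ $\mathbb{Q}_\mu$-a.s. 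Thus $\{W_\infty(\phi)<\infty\}=\{\sum_k V_k<\infty\}$ $\mathbb{Q}_\mu$-a.s.; conditionally on the spine-and-marks $\sigma$-algebra $\mathcal{G}$ the $V_k$ are independent and nonnegative, so a conditional Borel--Cantelli / Kolmogorov three-series argument, together with two-sided control of the immigrant contributions obtained from the Laplace functional of $\chi$ (boundedness of $\phi$ makes the mild solution $u^{\mathbf 1}_s(x,i)\asymp\phi(x,i)$, whence $\bP_{a\delta_{(x,i)}}(\langle\phi,\chi_s\rangle>\eta)\asymp\min\{1,a\phi(x,i)\}$), reduces the problem to deciding whether $\sum_k\min\{1,\,e^{-\lambda_1\sigma_k}\Delta_k\}<\infty$ $\mathbb{Q}_\mu$-a.s.

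\textbf{The $L\log L$ evaluation via ergodicity of the spine.} Here Assumption~\ref{assume2} enters: intrinsic ultracontractivity makes $(X,Y)$ under $\Pi^\phi$ exponentially ergodic with stationary density $\phi^2$ (the content of \eqref{IU''}), so the marked switching process $\{(\sigma_k,(X_{\sigma_k-},Y_{\sigma_k-}),\Delta_k)\}_k$ obeys a quenched law of large numbers: $\sigma_k\sim k/\Theta$ with $\Theta=\sum_{i\in S}\int_D\phi(x,i)b(x,i)\pi(x,i;\phi)m(x,i)\,dx$, and the empirical distribution of the marks converges to the normalized intensity $\Theta^{-1}\phi(x,i)b(x,i)\pi(x,i;\phi)\,u\,F(x,i;du)\,dx$. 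Feeding this into the elementary estimate $\sum_{j\ge 0}\min\{1,ce^{-aj}\}\asymp a^{-1}(1+\log^+c)$ yields
\[
\sum_k\min\{1,e^{-\lambda_1\sigma_k}\Delta_k\}\ \asymp\ \frac1{\lambda_1}\sum_{i\in S}\int_D\phi(x,i)b(x,i)\pi(x,i;\phi)\Big(m(x,i)+\int_0^\infty u\log^+ u\,F(x,i;du)\Big)dx .
\]
The term with $m$ is finite since $b,n,\phi,m$ are bounded and $|D|<\infty$; and since $\pi(x,i;\phi)\le\|\phi\|_\infty$ and $y\mapsto y\log^+(1/y)$ is bounded on $(0,\|\phi\|_\infty]$, replacing $\log^+u$ by $\log^+(\pi(x,i;\phi)u)$ changes the remaining integral by a finite amount. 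Hence the right-hand side is finite iff $\sum_{i\in S}\int_D\phi(x,i)b(x,i)\,\pi(x,i;\phi)\!\int_0^\infty u\log^+(\pi(x,i;\phi)u)\,F(x,i;du)\,dx=\sum_{i\in S}\int_D\phi(x,i)b(x,i)l(x,i)\,dx$ is finite, which is \eqref{LlogL-BH}. Combined with the dichotomy of the first paragraph, this gives the stated equivalence and the $L^1$-convergence when \eqref{LlogL-BH} holds.

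\textbf{Where the difficulty lies.} Two steps carry the real weight. First, \emph{establishing the pathwise spine decomposition} with the switching-caused immigration correctly identified: this requires the non-local Feynman--Kac/change-of-measure computation on the switched-diffusion representation \eqref{int-equi} and careful bookkeeping of the extra immigration produced whenever the spine's type switches, in particular pinning the size-biased jump law down to $F^{\pi(\phi)}$; it has no counterpart in the purely local theory of \cite{LRS09}. Second, the \emph{quenched ergodic averaging}: one must upgrade the exponential ergodicity behind \eqref{IU''} to a law of large numbers for the marked switching process and control, uniformly in $(x,i)\in D\times S$ --- including near $\partial D$, where $\phi$ degenerates --- the interaction between the geometric discount $e^{-\lambda_1\sigma_k}$, the fluctuations of the switching times, and the local visiting frequencies. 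The $0$--$1$ law and the Borel--Cantelli reduction to the $L\log L$ series are then routine given \cite{LPP,LRS09,KP}.
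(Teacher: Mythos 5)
Your proposal follows the paper's high-level architecture (LPP martingale change of measure, spine decomposition with switching-caused immigration, reduction to an a.s.\ finiteness question for the immigration sum), and the first two paragraphs match the paper's Proposition~\ref{equi-deg}, Theorem~\ref{t:spine}, and the first half of the proof of Theorem~\ref{maintheorem} in spirit. But the crucial third paragraph --- the ``$L\log L$ evaluation via ergodicity'' --- departs from the paper's Lemma~\ref{lemma1} and contains a genuine gap.

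The paper never uses a quenched law of large numbers $\sigma_k \sim k/\Theta$. Instead, Lemma~\ref{lemma1} exploits the fact that under $\Pi^\phi_{\phi\mu}$ the spine starts from its invariant measure $\phi^2\,dx\,di$, so the expected number of switching times $s$ at which the discounted mark $e^{-\varepsilon s}m_s\pi(\widehat X_s,\widehat Y_s;\phi)>1$ is computed \emph{exactly} by Fubini and a time integration, producing $\varepsilon^{-1}\sum_j\int_D(b\phi)(y,j)l(y,j)\,dy$; Borel--Cantelli handles the big marks, a direct first-moment bound handles the small marks, and in the divergent case a second-moment (Paley--Zygmund) argument combined with the ergodicity of the tail event yields a.s.\ divergence. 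Your ergodic-averaging replacement runs into the following difficulty: even granting $\sigma_k=k/\Theta+\epsilon_k$, the fluctuation $\epsilon_k$ is typically $O(\sqrt k)$, and the factor $e^{-\lambda_1\sigma_k}=e^{-\lambda_1 k/\Theta}e^{-\lambda_1\epsilon_k}$ has the multiplicative error $e^{\pm c\sqrt k}$ which is \emph{not} negligible against the geometric discount. Your ``elementary estimate'' $\sum_j\min\{1,ce^{-aj}\}\asymp a^{-1}(1+\log^+c)$ therefore cannot be applied by substituting $j\leftrightarrow\sigma_k$; the $\asymp$ you write between the random sum and the $L\log L$ integral is asserted, not proved, and is exactly where the real work of Lemma~\ref{lemma1} (the $e^{\varepsilon s}$ truncation, the Borel--Cantelli computation using stationarity, and the Paley--Zygmund lower bound) would have to be inserted.

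A second, smaller gap: your reduction from $\{W_\infty(\phi)<\infty\}$ to $\{\sum_k\min\{1,e^{-\lambda_1\sigma_k}\Delta_k\}<\infty\}$ via a ``conditional three-series argument'' and the two-sided control $\bP_{a\delta_{(x,i)}}(\langle\phi,\chi_s\rangle>\eta)\asymp\min\{1,a\phi(x,i)\}$ is stated without supporting argument. The paper sidesteps this entirely: it bounds the immigration martingale $M_t(\phi)$ by its \emph{conditional mean} $\sum_{s}e^{-\lambda_1 s}m_s\phi(\widehat X_s,\widehat Y_s)$ given $\mathcal G$, so that Lemma~\ref{lemma1}(i) plus the conditional submartingale convergence of Lemma~\ref{con-mart-conver} gives a.s.\ convergence directly, without ever needing two-sided concentration of $\langle\phi,\chi^s_t\rangle$ around its mean. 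This is a cleaner route; your proposal trades it for a three-series argument that is not supplied and is not obviously simpler.
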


Since \eqref{LlogL-BH} does not depend on $\mu$, it is also equivalent to that
$W_\infty(\phi)$ is non-degenerate under $\bP_\mu$ for
every non-trivial
measure $\mu\in M_F(D\times S)$.

The proof of this theorem is accomplished by combining the ideas
from \cite{LPP} with the ``spine decomposition" of \cite{EK} and \cite{LRS09}.
The new feature here is that we consider a different type of branching
mechanisms. The new type of branching mechanisms considered here is
non-local as opposed to the local branching mechanisms in \cite{EK} and \cite{LRS09}.
The non-local branching mechanisms we consider
here result in a kind of {\it non-local}  immigration, as opposed to
the local immigration in \cite{LRS09}.

In the next section, we show that when $D$ is a bounded $C^{1,1}$ domain in
$\R^d$, Assumption \ref{assume2} holds. In Section \ref{s:spine}, we give
our spine decomposition of the superdiffusion $\chi$ under  a martingale change
of measure with the help of Poisson point processes. In Section \ref{s:proof},
we use this spine decomposition to prove Theorem \ref{maintheorem}.

\section{Intrinsic Ultracontractivity}\label{s:iu}

In this section, we  show that when $D$ is a bounded $C^{1, 1}$ domain in
$\R^d$, Assumption \ref{assume2} holds, that is,
the semigroup $\{P_t:t\ge 0\}$ is intrinsically
ultracontractive and the first eigenfunction is bounded.

Throughout this section, we assume  that $D$ is a bounded $C^{1, 1}$ domain in
$\R^d$. Let $p_0(t, x, y)$ be the transition density of the killed Brownian motion
in $D$. For each $i\in S$, let $p_i(t, x, y)$ be the transition density of $\xi^{i, D}_t$, the process obtained by killing the diffusion with generator ${\cL}_i$ upon exiting from $D$.

It is known (see \cite{CKP})
that there exist positive constants $C_i$, $i=1, 2, 3, 4$, such that
for all $t\in (0, 1]$, $j=0, 1, \cdots K$ and $x, y\in D$,
\begin{align}
&p_j(t, x, y)\ge C_1\left(\frac{\delta_D(x)}{\sqrt{t}}\wedge 1\right)
\left(\frac{\delta_D(y)}{\sqrt{t}}\wedge 1\right)t^{-d/2}e^{-\frac{C_2|x-y|^2}{t}},\label{e:lowerbnd}\\
&p_j(t, x, y)\le C_3\left(\frac{\delta_D(x)}{\sqrt{t}}\wedge 1\right)
\left(\frac{\delta_D(y)}{\sqrt{t}}\wedge 1\right)t^{-d/2}e^{-\frac{C_4|x-y|^2}{t}}.\label{e:upperbnd}
\end{align}
Using these we can see that there exists $C_5>0$ such that for any $t\in (0, C_4/C_2]$ and $x, y\in D$,
\begin{equation}\label{e:compofhks}
p_j(t, x, y)\le C_5p_0\left({C_2t}/{C_4}, x, y\right).
\end{equation}

It follows from \cite[Theorem 5.3]{CZ} that for any $x, y\in D$ and $k, l\in S$,
\begin{align}
&p(t, (x, k), (y, l))\nonumber\\
&\quad = \delta_{kl}p_k(t, x, y)\nonumber\\
&\qquad+\sum^\infty_{n=0}\sum_{\stackrel{1\le l_1, l_2, \dots, l_n\le K}{l_1
\neq k, l_n\neq l, l_i\neq l_{i+1}}}\int\cdots\int_{0<t_1<t_2<\cdots<t_n<t}\int_D\cdots\int_Dp_k(t_1, x, y_1)q_{kl_1}(y_1)\nonumber\\
&\qquad\quad\times p_{l_1}(t_2-t_1, y_1, y_2)q_{l_1l_2}(y_2)\cdots q_{l_nl}(y_n)\nonumber\\
&\qquad\quad\times p_l(t-t_n, y_n, y)dy_n\cdots dy_1dt_n\cdots dt_1.
\label{e:CZdensity}
\end{align}
Let $M>0$ be such that
$$
|q_{kl}(x)|\le M, \qquad x\in D, k, l\in S.
$$
Then it follows from \eqref{e:compofhks} and \eqref{e:CZdensity} that for
$t\in (0, C_4/C_2]$, $x, y\in D$ and $k, l\in S$,
\begin{align*}
&p(t, (x, k), (y, l))\\
&\le C_5p_0\left( {C_2t}/{C_4}, x, y\right)\\
&\quad +\sum^\infty_{n=0}(MKC_5)^n\int\cdots\int_{0<t_1<t_2<\cdots<t_n<t}\int_D\cdots\int_Dp_0
\left({C_2t_1}/{C_4}, x, y_1\right)\\
&\quad\quad\times
p_0\left( {C_2(t_2-t_1)}/{C_4}, y_1, y_2\right)\cdots
p_0\left({C_2(t_-t_n)}/{C_4}, y_n, y\right)dy_n\cdots dy_1dt_n\cdots dt_1\\
&\le C_5p_0\left({C_2t}/{C_4}, x, y\right)+\sum^\infty_{n=0}\frac{(MKC_5t)^n}{n\!}p_0\left({C_2t}/{C_4}, x, y\right).
\end{align*}
Thus there exists $t_0\in (0, C_4/C_2)$ such that for $t\in (0, t_0]$,
 $x, y\in D$ and $k, l\in S$,
 \begin{equation}\label{e:upbd4density}
 p(t, (x, k), (y, l))\le C_6p_0( {C_2t}/{C_4}, x, y)
 \end{equation}
for some $C_6>0$.

Now we prove a similar lower bound. It follows from \eqref{e:CZdensity}
that for any $t\in (0, 1]$, $x, y\in D$ and $k\in S$,
\begin{equation}\label{e:lowerbd1}
 p(t, (x, k), (y, k))\ge p_k(t, x, y).
\end{equation}
Now suppose $k\neq l$. Let $l_0, l_1, \cdots, l_n\in S$ with $l_i\neq l_{i+1}$, $l_0=k$,
$l_n=l$ such that $\{x\in D: q_{l_il_{i+1}}(x)>0\}$ has positive Lebesgue
measure for $i=0, 1, \cdots, n-1$.
Then it follows from \eqref{e:CZdensity} that
\begin{align*}
p(t, (x, k), (y, l))&\ge \int\cdots\int_{0<t_1<t_2<\cdots<t_n<t}\int_D\cdots\int_Dp_k(t_1, x, y_1)q_{kl_1}(y_1)\\
&\qquad \times p_{l_1}(t_2-t_1, y_1, y_2)q_{l_1l_2}(y_2)\cdots q_{l_nl}(y_n)\nonumber\\
&\qquad\times p_l(t-t_n, y_n, y)dy_n\cdots dy_1dt_n\cdots dt_1.
\end{align*}
Thus it follows from \eqref{e:lowerbnd} that there exists $C_7>0$ such that
for any $t\in (0, 1]$, $x, y\in D$,
\begin{equation}\label{e:lowerbd2}
p(t, (x, k), (y, l))\ge C_7\left(\frac{\delta_D(x)}{\sqrt{t}}\wedge 1\right).
\end{equation}
Combining \eqref{e:lowerbd1} and \eqref{e:lowerbd2} we get that for any $t\in (0, 1]$, $x, y\in D$ and $k, l\in S$,
\begin{equation}\label{e:lowerbd}
p(t, (x, k), (y, l))\ge C_8\left(\frac{\delta_D(x)}{\sqrt{t}}\wedge 1\right)
\end{equation}
for some $C_8>0$.

It follows from \eqref{e:upbd4density} and \eqref{e:lowerbd} that there exists positive constants $C_{9}<C_{10}$ such that for all $(x, k)\in D\times S$,
$$
C_{9}\delta_D(x)\le \phi(x, k)\le C_{10}\delta_D(x).
$$
Combining this with \eqref{e:upbd4density}, and using the semigroup property, we immediately get the intrinsic ultarcontractivity of
$\{P_t:t\ge 0\}$.
The boundedness of $\phi$ is an immediate consequence of the display above.

\section{Spine decomposition}
\label{s:spine}

Let ${\cal F}_t=\sigma(\chi_s;\ s\leq t)$. We define a probability
measure $\widetilde{\bP}_\mu$ by:
\begin{equation}\label{measure-change}
\frac{d\widetilde{\bP}_\mu}{d\bP_\mu}\Big|_{{\cal F}_t} =
\frac{1}{\langle\phi,\mu\rangle}W_t(\phi).
\end{equation}
The purpose of this section is to give a spine decomposition of $\{\chi_t, t\ge 0\}$
under $\widetilde{\bP}_\mu$.  This decomposition will play an important role
in  proving Theorem \ref{maintheorem}

The spine decomposition is roughly as follows:
Under $\widetilde{\bP}_\mu$,  $\{\chi_t, t\ge 0\}$
has the same law as the sum of the following two
independent measured-valued processes: the first process is a copy
of $\chi$ under $\bP_{\mu}$,  and the second process is, roughly speaking,
obtained by taking an ``immortal particle" that moves according to
the law of $\{(X,  Y),\Pi^\phi_{\phi\mu}\}$
and spins off pieces of
mass that continue to evolve according to the dynamics of $\chi$.

Define
\begin{equation}\label{e:dofeta}
\eta(x,i;\lambda)=\int_0^\infty e^{-u\lambda}uF(x,i;du),\quad \lambda\ge 0,\, (x,i)\in D\times S.
\end{equation}
We first  give a formula for the one-dimensional distribution of $\chi$
under $\widetilde{\bP}_\mu$.

\begin{thm}\label{theorem1}
Suppose $\mu\in M_F(D\times S)$ and
$g\in B^+_b(D\times S).$
Let $D_J$ be the set of jump times of $(X, Y)$. Then
\begin{align}\label{1inthm1}
&\widetilde{\bP}_\mu\left(\exp\langle-g,\chi_t \rangle\right)\nonumber\\
 &=\
\bP_\mu\big(\exp\langle - g,\chi_t\rangle\big)
\Pi_{\phi\mu}^{\phi}\left[\exp\left(\sum_{s\in D_J, 0<s\le t}\ln\left(\frac{\eta(X_{s}, Y_{s};\pi(X_{s},  Y_{s};u^g_{t-s}))}{n(X_{s}, Y_{s})}+\frac{\widetilde n(X_s,  Y_s)}{n(X_s, Y_s)}\right)\right) \right],
\end{align}
where  $u^g_{t-s}$ is the unique locally bounded positive solution
of \eqref{int} with $f$ replaced by $g$.
\end{thm}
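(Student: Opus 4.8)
The plan is to unravel the change of measure \eqref{measure-change} and then differentiate the Laplace functional of $\chi$ along the direction $\phi$. Since $W_t(\phi)=e^{-\lambda_1 t}\langle\phi,\chi_t\rangle$ by \eqref{martingale-1},
$$\widetilde{\bP}_\mu\big(\exp\langle-g,\chi_t\rangle\big)=\frac{e^{-\lambda_1 t}}{\langle\phi,\mu\rangle}\,\bP_\mu\big[\langle\phi,\chi_t\rangle\exp\langle-g,\chi_t\rangle\big].$$
For $\theta\ge 0$, applying \eqref{Laplace} to the function $g+\theta\phi$ gives $\bP_\mu\exp\langle-(g+\theta\phi),\chi_t\rangle=\exp\langle-u^{g+\theta\phi}_t,\mu\rangle$. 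Differentiating both sides at $\theta=0^+$ — on the left by dominated convergence, which is legitimate because $\bP_\mu\langle\phi,\chi_t\rangle=e^{\lambda_1 t}\langle\phi,\mu\rangle<\infty$ by \eqref{expX} — yields
$$\bP_\mu\big[\langle\phi,\chi_t\rangle\exp\langle-g,\chi_t\rangle\big]=\langle h_t,\mu\rangle\,\bP_\mu\big(\exp\langle-g,\chi_t\rangle\big),\qquad h_t:=\frac{\partial}{\partial\theta}\,u^{g+\theta\phi}_t\Big|_{\theta=0^+}.$$
Combining the two displays reduces the theorem to the identity $e^{-\lambda_1 t}\langle h_t,\mu\rangle/\langle\phi,\mu\rangle=\Pi^\phi_{\phi\mu}\big[\prod_{s\in D_J,\,0<s\le t}\kappa_{t-s}(X_s,Y_s)\big]$, where throughout I write $\kappa_r(x,i):=\frac{\eta(x,i;\pi(x,i;u^g_r))}{n(x,i)}+\frac{\widetilde n(x,i)}{n(x,i)}=\frac{\widetilde n(x,i)+\eta(x,i;\pi(x,i;u^g_r))}{n(x,i)}$ for the factor appearing in \eqref{1inthm1}.

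First I would pin down the linear equation solved by $h$. Differentiating \eqref{int} in $\theta$ and using, from \eqref{psi}, \eqref{def-zeta1}--\eqref{zeta-1+2}, \eqref{e:dofeta} and \eqref{def-tilde-d}, that
$$\frac{\partial}{\partial f_l}\psi(x,i;f)=b(x,i)\Big(\delta_{il}-p^{(i)}_l(x)\big(\widetilde n(x,i)+\eta(x,i;\pi(x,i;f))\big)\Big),$$
one finds that $h$ is the unique locally bounded solution of
$$h_t(x,i)=\Pi^i_x\phi_i(\xi^{i,D}_t)-\Pi^i_x\!\int_0^t\! b(\xi^{i,D}_s,i)\Big[h_{t-s,i}(\xi^{i,D}_s)-\big(\widetilde n+\eta(\cdot\,;\pi(\cdot\,;u^g_{t-s}))\big)(\xi^{i,D}_s,i)\!\sum_{l\in S}p^{(i)}_l(\xi^{i,D}_s)\,h_{t-s,l}(\xi^{i,D}_s)\Big]ds;$$
unique solvability is a routine Picard/Gronwall argument since $b$, $n$, $\{p^{(i)}_l\}$ are bounded and $u^g$ is locally bounded, and it is at this point that one also justifies (in the standard way) that $\theta\mapsto u^{g+\theta\phi}_t$ is differentiable with derivative $h$. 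As a consistency check, when $g=0$ we have $u^0\equiv0$ and $\eta(x,i;0)=m(x,i)$, so the inner factor equals $\widetilde n+m=n$, the bracket collapses, the equation becomes the mean equation \eqref{mean-int} with $f=\phi$, giving $h_t=P^{{\cal A}+B\cdot (N-I)}_t\phi=e^{\lambda_1 t}\phi$; accordingly $\kappa_r\equiv 1$ and \eqref{1inthm1} reduces to $\widetilde{\bP}_\mu(1)=1$.

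Next I would solve this linear equation probabilistically in terms of the switched diffusion $(X,Y)$. By the elementary Feynman--Kac identity for the killed diffusion $\xi^{i,D}$, the equation above is equivalent to
$$h_t(x,i)=\Pi^i_x\Big[\phi_i(\xi^{i,D}_t)\,e^{-\int_0^t b(\xi^{i,D}_s,i)\,ds}\Big]+\int_0^t\Pi^i_x\Big[e^{-\int_0^s b(\xi^{i,D}_u,i)\,du}\,b(\xi^{i,D}_s,i)\big(\widetilde n+\eta(\cdot\,;\pi(\cdot\,;u^g_{t-s}))\big)(\xi^{i,D}_s,i)\!\sum_{l\in S}p^{(i)}_l(\xi^{i,D}_s)\,h_{t-s,l}(\xi^{i,D}_s)\Big]ds.$$
Iterating this and regrouping the resulting series as a sum over the excursions of $(X,Y)$ between its successive switching times — using the structure of the switched diffusion (\cite{CZ}): between switches $(X,Y)$ runs as $\xi^{i,D}$, switches occur at rate $b(\cdot,i)n(\cdot,i)$ and land in type $l$ with probability $p^{(i)}_l(\cdot)$, so that on each excursion the weight $e^{-\int b}$ equals the switch-survival factor $e^{-\int bn}$ times the potential $e^{\int b(n-1)}$, while at a switch $i\to l$ the density ratio $\frac{b(\cdot,i)p^{(i)}_l(\cdot)(\widetilde n(\cdot,i)+\eta(\cdot,i;\pi(\cdot,i;u^g_{t-s})))}{b(\cdot,i)n(\cdot,i)p^{(i)}_l(\cdot)}=\kappa_{t-s}(\cdot,i)$ is picked up — one identifies
$$h_t(x,i)=\Pi_{(x,i)}\Big[\phi(X_t,Y_t)\exp\Big(\int_0^t b(X_s,Y_s)(n(X_s,Y_s)-1)\,ds\Big)\!\!\prod_{s\in D_J,\,0<s\le t}\!\!\kappa_{t-s}(X_s,Y_s)\Big],$$
where at a switching time $s$ the factor $\kappa_{t-s}$ is read at the state of $(X,Y)$ just before the switch (only the $Y$-component being affected, as $X$ is continuous). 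By \eqref{Martingale switched diffusion} the product of the first two factors inside the expectation equals $\phi(x,i)e^{\lambda_1 t}\big(d\Pi^\phi_{(x,i)}/d\Pi_{(x,i)}\big)\big|_{{\cal E}_t}$ and $\prod_{s\in D_J,\,0<s\le t}\kappa_{t-s}(X_s,Y_s)$ is ${\cal E}_t$-measurable, so $e^{-\lambda_1 t}h_t(x,i)/\phi(x,i)=\Pi^\phi_{(x,i)}\big[\prod_{s\in D_J,\,0<s\le t}\kappa_{t-s}(X_s,Y_s)\big]$; multiplying by $\phi(x,i)$, integrating against $\mu$, and invoking $\Pi^\phi_{\phi\mu}=\frac1{\langle\phi,\mu\rangle}\int\phi(x,i)\,\Pi^\phi_{(x,i)}\,d\mu$ gives the reduced identity above, and writing $\prod=\exp(\sum\ln)$ and splitting $\kappa$ as $\frac{\eta}{n}+\frac{\widetilde n}{n}$ yields \eqref{1inthm1}.

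The step I expect to be the main obstacle is this last probabilistic identification of $h_t$: one must control the sum/product over the switching set $D_J\cap(0,t]$ (finite almost surely because the switching rate $b\,n$ is bounded), carry the time-shifted arguments $u^g_{t-s}$ correctly through the iteration, and verify convergence of the iterative series — and, relatedly, make the $\theta$-differentiation of $u^{g+\theta\phi}_t$ rigorous. The first reduction, the uniqueness for the linear equation, and the change of measure from $\Pi_{(x,i)}$ to $\Pi^\phi_{(x,i)}$ together with the passage to $\Pi^\phi_{\phi\mu}$ are routine.
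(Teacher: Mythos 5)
Your proposal is correct and follows the same overall strategy as the paper: differentiate the Laplace functional at $\theta=0$ in the direction $\phi$, derive the linear integral equation for $h_t=\partial_\theta u^{g+\theta\phi}_t|_{\theta=0}$, identify $h_t$ via a Feynman--Kac representation for the switched diffusion with a multiplicative functional over the jump times, and then change measure from $\Pi_{(x,i)}$ to $\Pi^\phi_{(x,i)}$. The derivative $\partial\psi/\partial f_l$, the consistency check at $g=0$, and the factor $\kappa_{t-s}$ picked up at each switch are all computed correctly. The genuine difference is in how the Feynman--Kac step is established: the paper invokes a separate abstract lemma (Lemma \ref{G-nonlocal-FK} in the Appendix), which is a general time-dependent non-local Feynman--Kac formula for an $m$-symmetric Borel standard process expressed through its L\'evy system, whereas you reconstruct the same representation by first clearing the diagonal potential with the elementary local Feynman--Kac identity for $\xi^{i,D}$ and then iterating the remaining integral equation, regrouping the series along the excursions of $(X,Y)$ between its switching times. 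The paper's route is cleaner and immediately gives uniqueness of the solution (via \cite{Li}); your excursion/iteration route is more elementary and makes the mechanism transparent (the ratio $\frac{bp^{(i)}_l(\widetilde n+\eta)}{bnp^{(i)}_l}=\kappa$ at each switch), but, as you note yourself, it requires separately controlling convergence of the series, the a.s.\ finiteness of $D_J\cap(0,t]$, and the $\theta$-differentiation; in the paper those concerns are packaged into the hypotheses \eqref{e:6.1}--\eqref{e:6.2} of Lemma \ref{G-nonlocal-FK}, which are checked via Remark \ref{rem6.2}(iii) using $\sup_x J(x,\cdot)<\infty$. One small point in your favour: you explicitly evaluate the factor at the state just before the switch, i.e.\ at $(X_{s-},Y_{s-})$, which is what the L\'evy-system argument $F(t-s,\xi_{s-},\xi_s)$ actually produces; the paper's display \eqref{1inthm1} writes $(X_s,Y_s)$, which is a mild abuse of notation since $Y$ is c\`adl\`ag and $Y_s\ne Y_{s-}$ at a jump.
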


\begin{proof} By \eqref{measure-change},
\begin{equation}\label{exp-tilde-P}\begin{array}{rl}
\displaystyle \widetilde{\bP}_\mu\big(\exp\langle-g,\chi_t \rangle\big) =&
\displaystyle\frac{e^{-\lambda_1 t}}{\langle\phi,\mu\rangle}{\bP}_\mu\left(\langle \phi,\chi_t\rangle\exp\langle-g,\chi_t \rangle\right)\\
=&\displaystyle\left.\frac{e^{-\lambda_1 t}}{\langle\phi,\mu\rangle}\frac{\partial}{\partial\theta}
{\bP}_\mu\left(\exp\langle-g-\theta\phi,\chi_t \rangle\right)\right |_{\theta=0}\\
=&\left.\displaystyle\frac{e^{-\lambda_1 t}}{\langle\phi,\mu\rangle}\frac{\partial}{\partial\theta}
\exp\langle-u^{g+\theta\phi}_t,\mu\rangle\right |_{\theta=0}\\
=&\displaystyle\frac{e^{-\lambda_1 t}}{\langle\phi,\mu\rangle}\exp\langle-u^g_t,\mu\rangle\left\langle\left.\frac{\partial}{\partial\theta}
 u^{g+\theta \phi}_t\right|_{\theta=0},\mu\right\rangle.
\end{array}
\end{equation}
Note that $\exp\langle-u^g_t,\mu\rangle =\bP_\mu\exp\langle -g,\chi_t\rangle,$
and $u^{g+\theta \phi}_t$ is the unique
locally bounded positive solution of the integral equation
$$
u^{g+\theta\phi}_t(x,i)+\Pi_{(x,i)}\left[
\int^t_0\widehat\psi(X_s,  Y_{s}; u^{g+\theta\phi}_{t-s})ds\right]=\Pi_{(x,i)}\left[(g+\theta\phi)(X_t, Y_t)\right], \quad
t\ge 0.\
$$
Taking derivative with respect to $\theta$ on both sides of the above equation, and then letting $\theta=0$, we have that $v_t(x,i):=\left.\frac{\partial}{\partial\theta}
 u^{g+\theta \phi}_t\right|_{\theta=0}$ satisfies
 \begin{align}\label{int-v}
 &v_t(x,i)-
 \Pi_{(x,i)}\int^t_0b(X_s, Y_{s})\left(n(X_s,  Y_{s})-1\right)v_{t-s}(X_s, Y_{s})ds\nonumber\\
 &+\ \Pi_{(x,i)}\int^t_0b(X_s,  Y_{s})\left(m(X_s,  Y_{s})-\eta\left(X_s, Y_{s}; \pi(X_s,  Y_{s}; u^{g}_{t-s})\right)\right)
\sum^K_{j=1}p^{(Y_{s})}_{j}(X_s)v_{t-s}(X_s,j)ds\nonumber\\
 &=\ \Pi_{(x,i)}\left[\phi(X_t,  Y_t)\right].
 \end{align}
Let
\begin{equation}\label{def-J}
J((x, k), d(y, l))=\delta(x-y)q_{kl}(x)1_{\{k\neq l\}}dydl,
\qquad (x, k)\in D\times S,
\end{equation}
where $dl$ stands for the counting measure on $S$.
Then $(J((x, k), d(y, l)), t)$ is a L\'evy system of $(X, Y)$.
Define
\begin{equation}\label{def-F}
F(t-s, (x, i), (y, j)):=\ln \left(\frac{\eta(x, i; \pi(x, i; u^g_{t-s})}{n(x, i)}-
\frac{m(x, i)}{n(x, i)}+1\right)1_{i\neq j}.
\end{equation}
Clearly, $F\le 0$.
We would like to apply Lemma
\ref{G-nonlocal-FK}  with $\xi=(X, Y)$, $q(t-s, (x,i))=b(x,i)(n(x,i)-1)$, $J$  given by \eqref{def-J} and $F$ given by \eqref{def-F}.  Since $q_{ij}(x)$,
$i,j\in S$,  are bounded in $D$ and $D$ has finite Lebesgue measure, we have $\sup_{(x,i)\in D\times S}J((x,k), D\times S)<\infty$. By Remark \ref{rem6.2}(iii), conditions \eqref{e:6.1} and \eqref{e:6.2} are satisfied.  Thus we can apply Lemma \ref{G-nonlocal-FK} to get
\begin{align}\label{non-localFK'}
&v_t(x,i)\nonumber\\
&=\ \Pi_{(x,i)}\left[\exp\left\{\sum_{s\in D_J, 0<s\le t}\ln\left(\frac{\eta(X_{s},  Y_{s};\pi(X_{s}, Y_{s};u^g_{t-s}))}{n(X_{s}, Y_{s})}-\frac{m(X_s, Y_s)}{n(X_s, Y_s)}+1\right)\right.\right.\nonumber\\
&\qquad\qquad\qquad\left.\left.+\int^t_0b(X_s, Y_{s})
(n(X_s, Y_{s})-1){\rm d}s\right\}\phi(X_t,  Y_t)\right]\nonumber\\
&=\ e^{\lambda_1t}\phi(x,i)\Pi_{(x,i)}^{\phi}\left[\exp\left\{\sum_{s\in D_J, 0<s\le t}\ln\left(\frac{\eta(X_{s}, Y_{s};\pi(X_{s}, Y_{s};u^g_{t-s}))}{n(X_{s}, Y_{s})}+\frac{\widetilde n(X_s,  Y_s)}{n(X_s, Y_s)}\right)\right\}\right].
\end{align}
Combining \eqref{exp-tilde-P} and \eqref{non-localFK'},
we obtain
\begin{align*}
&\widetilde{\bP}_\mu\big(\exp\langle-g,\chi_t \rangle\big)\\
&=\bP_\mu\big(\exp\langle - g,\chi_t\rangle\big)\\
&\quad\cdot\Pi_{\phi\mu}^{\phi}\left[\exp\left\{\sum_{s\in D_J, 0<s\le t}\ln\left(\frac{\eta(X_{s}, Y_{s};\pi(X_{s},  Y_{s};u^g_{t-s}))}{n(X_{s},  Y_{s})}+\frac{\widetilde n(X_s, Y_s)}{n(X_s, Y_s)}\right)\right\}\right].
\end{align*}
\qed
\end{proof}

Define
\begin{equation}\label{def-n}\widetilde F(x,i;du)=\frac{1}{n(x,i)}\left(\widetilde n(x,i)\delta_0+I_{(0,\infty)}u\, F(x,i;du)\right).
\end{equation}
Then, by \eqref{def-tilde-d} and \eqref{e:dofeta},
$\widetilde F(x,i;\cdot)$ is a probability measure
on $[0,\infty)$ for any $(x,i)\in D\times S$ and
$$
\frac{\eta(x,i;\lambda)}{n(x, i)}+\frac{\widetilde n(x,i)}{n(x,i)}=
\int_{[0,\infty)}e^{-u\lambda}\widetilde F(x,i;du) \quad \hbox{for every } \lambda\ge 0.
$$
Thus we may rewrite
 \eqref{1inthm1} as
\begin{equation}\label{1inthm1'}\begin{array}{rl}&\displaystyle\widetilde{\bP}_\mu\big(\exp\langle-g,\chi_t \rangle\big)\\
=&\displaystyle \bP_\mu\big(\exp\langle - g,\chi_t\rangle\big)\cdot\Pi_{\phi\mu}^{\phi}\left[\prod_{s\in D_J, 0<s\le t}\int^\infty_0\exp(-u\pi(X_{s}, Y_{s};u^g_{t-s}))\widetilde F(X_{s}, Y_{s};du)\right].
\end{array}\end{equation}

 From \eqref{1inthm1'} we  see that the superdiffusion
$\{\chi_t, t\ge 0; \widetilde{\bP}_\mu\}$ can be decomposed
into two independent parts.  The first part is
 a copy of the original superdiffusion and the second part is an
immigration process. To describe the second part precisely, we
need to introduce another measure-valued process $\{\widehat{\chi}_t, t\ge 0\}$.
Now we construct the measure-valued process $\{\widehat{\chi}_t, t\ge 0\}$ as
follows:

\begin{description}
\item{(i)}
Suppose that $(\widehat X, \widehat Y)=\{(\widehat X_t,  \widehat Y_t), t\ge 0\}$ is defined
on some probability space $(\Omega, \bP_{\mu, \phi})$, and $(\widehat X, \widehat Y)$ has the same law as $((X, Y);
\Pi^\phi_{\phi\mu})$.  $(\widehat X, \widehat Y)$ serves as the spine or the
immortal particle, which visits every part of $D\times S$ for large times since it is an ergodic process.
Let $D_J$ be the set of jump points of $(\widehat X, \widehat Y)$. $D_J$ is countable.

\item{(ii)}
Conditioned on $s\in D_J$, a measure-valued process $\chi^s$ started at $m_s\delta_{({\widehat X}_s,l)} (l\in S)$ is immigrated at the space position ${\widehat X}_s$ and the new immigrated particles choose their types independently
according to the (nonrandom) distribution $\pi(x,i; \cdot)$. We suppose $\{m_s;s\in D_J\}$ is also defined on $(\Omega, \bP_{\mu, \phi})$ such that, given $s\in D_J$ and $(\widehat X_s, \widehat Y_s)$, the distribution of $m_s$ is   $\widetilde F({\widehat X}_s, {\widehat Y}_s;dr)$.

\item{(iii)}
Once the particles are in the system, they begin to move and
branch according to the $((X, Y),\, \widehat \psi(x,i,\cdot))$-superprocess
independently.
\end{description}

We use $(\chi^{s}_t,\ t\ge s )$ to denote the measure-valued
process generated by the  mass immigrated at time $s$
and spatial position ${\widehat X}_s$.
 Conditional on
$\{({\widehat X}_t, {\widehat Y}_t),  t\ge 0; m_s, s\in D_J\}$, $\{\chi^{s}_t, t\ge s\}$ for different $s\in D_J$ are
independent $((X, Y),\, \widehat \psi(x,i,\cdot))$-superprocesses. Set
\begin{equation}\label{X-hat}
\widehat{\chi}_t=\sum_{s\in(0,t]\cap D_J}\chi^s_t.
\end{equation}
The Laplace functional of $\widehat{\chi}_t$ is described in
the following proposition.

\begin{proposition}\label{prop}
The Laplace functional of $\widehat{\chi}_t$ under $\bP_{\mu,\phi}$
is equal to
$$
\Pi_{\phi\mu}^{\phi}\left\{\prod_{s\in(0,t]\cap D_J}\int_{[0,\infty)}\exp\left(-r\pi(X_s, Y_s; u^g_{t-s})\right)\widetilde F({X}_s, {Y}_s;dr)
\right\}.
$$
\end{proposition}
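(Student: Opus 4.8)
The plan is to compute the Laplace functional of $\widehat\chi_t$ by conditioning first on the spine together with the immigrated masses, then integrating out the masses, and finally averaging over the spine. So I would begin by conditioning on the whole trajectory of the spine $(\widehat X,\widehat Y)$ together with the family $\{m_s:s\in D_J\}$. By the construction in item (iii), given this data the processes $\{\chi^s:s\in D_J\cap(0,t]\}$ are independent, and $\chi^s$ is an $((X,Y),\widehat\psi)$-superprocess started at time $s$ from the finite measure $m_s\,\pi(\widehat X_s,\widehat Y_s;\cdot)=\sum_{l\in S}m_s\,p^{(\widehat Y_s)}_l(\widehat X_s)\,\delta_{(\widehat X_s,l)}$ on $D\times S$, in accordance with the prescription in item (ii) that the newly immigrated particles choose their types according to $\pi$. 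Since $\widehat\chi_t=\sum_{s}\chi^s_t$, independence gives $\exp\langle-g,\widehat\chi_t\rangle=\prod_s\exp\langle-g,\chi^s_t\rangle$, and by \eqref{int-equi} (which identifies $u^g$ as the log-Laplace functional of the $((X,Y),\widehat\psi)$-superprocess) the conditional expectation of each factor equals $\exp\langle-u^g_{t-s},\,m_s\,\pi(\widehat X_s,\widehat Y_s;\cdot)\rangle=\exp(-m_s\,\pi(\widehat X_s,\widehat Y_s;u^g_{t-s}))$. Hence
\begin{equation*}
\bP_{\mu,\phi}\Big[\exp\langle -g,\widehat\chi_t\rangle \;\Big|\; (\widehat X,\widehat Y),\,\{m_s\}\Big]
=\prod_{s\in(0,t]\cap D_J}\exp\!\big(-m_s\,\pi(\widehat X_s,\widehat Y_s;u^g_{t-s})\big).
\end{equation*}

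Next I would integrate out the masses. By item (ii), conditionally on the spine the $m_s$, $s\in D_J$, are independent with $m_s$ distributed as $\widetilde F(\widehat X_s,\widehat Y_s;\cdot)$. Since each factor above lies in $[0,1]$, I can enumerate the countably many points of $D_J\cap(0,t]$, take expectations over the first $N$ masses using Fubini, and let $N\to\infty$ by monotone convergence; the same enumeration-and-monotone-limit argument shows that the resulting infinite product is a well-defined $[0,1]$-valued random variable. This yields
\begin{equation*}
\bP_{\mu,\phi}\Big[\exp\langle -g,\widehat\chi_t\rangle \;\Big|\; (\widehat X,\widehat Y)\Big]
=\prod_{s\in(0,t]\cap D_J}\int_{[0,\infty)}\exp\!\big(-r\,\pi(\widehat X_s,\widehat Y_s;u^g_{t-s})\big)\,\widetilde F(\widehat X_s,\widehat Y_s;dr).
\end{equation*}
Finally, since $(\widehat X,\widehat Y)$ has law $\Pi^\phi_{\phi\mu}$ on $(\Omega,\bP_{\mu,\phi})$, taking the expectation over the spine produces exactly the asserted formula (and in passing confirms that $\widehat\chi_t$ is a.s. a finite measure).

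The steps involving Fubini, monotone convergence, and the joint measurability of $s\mapsto(\widehat X_s,\widehat Y_s)$ and of the maps into the product spaces (so that the conditional expectations above are legitimate) are routine, precisely because every factor is bounded by $1$; I would not belabor them. The one point that deserves care — and the only place where the specifics of the model intervene — is the very first identity: one must read off from item (ii) that the measure immigrated at a jump time $s$ is $m_s\,\pi(\widehat X_s,\widehat Y_s;\cdot)$, rather than, say, $m_s\,\delta_{(\widehat X_s,l)}$ for a single randomly chosen type $l$, and then use the independence of the $\chi^s$ together with \eqref{int-equi} to factor $\bP_{\mu,\phi}[\exp\langle-g,\widehat\chi_t\rangle\mid\,\cdots]$ as a product over $s\in D_J\cap(0,t]$. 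Once the immigration measure is identified correctly, the rest of the computation is forced.
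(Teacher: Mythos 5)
Your proof is correct and follows essentially the same three-step conditioning argument as the paper's: condition on the spine and masses to factor over the jump times via the Laplace functional of the superprocess, integrate out the conditionally independent masses against $\widetilde F(\widehat X_s,\widehat Y_s;\cdot)$, and then average over the spine using $(\widehat X,\widehat Y)\stackrel{d}{=}((X,Y),\Pi^\phi_{\phi\mu})$. You also correctly pin down the one interpretive subtlety (that the immigrated measure at time $s$ is $m_s\,\pi(\widehat X_s,\widehat Y_s;\cdot)$, giving the factor $\exp(-m_s\pi(\widehat X_s,\widehat Y_s;u^g_{t-s}))$), which is exactly how the paper's computation proceeds.
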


\begin{proof}
For any $g\in B_b^+(D\times S)$, using \eqref{Laplace}, we have
\begin{eqnarray*}
\bP_{\mu, \phi}\left[\exp(-\langle g,\widehat{\chi}_t\rangle)\right]
&=&\bP_{\mu, \phi}\left\{ \bP_{\mu, \phi}
\left[\exp(-\sum_{\sigma\in(0,t]\cap D_J}\langle g,
\chi_t^\sigma\rangle)\bigg|\sigma(({\widehat X}, {\widehat Y}), m)\right]\right\}\\
&=&\bP_{\mu, \phi}\left[\prod_{s\in(0,t]\cap D_J}
\exp\left(-m_s\pi(\widehat X_s, \widehat Y_s,u^g_{t-s})\right)\right]\\
&=&\bP_{\mu, \phi}\left\{\bP_{\mu,
\phi}\left[\prod_{s\in(0,t]\cap D_J}\exp\left(-m_s\pi(\widehat X_s, \widehat Y_s, u^g_{t-s})\right)
\bigg|\sigma(({\widehat X},{\widehat Y}))\right]\right\}\\
&=&\Pi_{\phi\mu}^{\phi}\left\{\prod_{s\in(0,t]\cap D_J}\int_{[0,\infty)}\exp\left(-r\pi(X_s, Y_s, u^g_{t-s})\right)\widetilde F({X}_s, {Y}_s;dr)
\right\}.
\end{eqnarray*}
\qed\end{proof}

Without loss of generality,
we suppose $\{\chi_t, t\ge 0; \bP_{\mu,\phi}\}$ is a multitype superdiffusion
defined on $(\Omega, \bP_{\mu, \phi})$,
having the same law as $\{\chi_t, t\ge 0;\bP_{\mu}\}$ and
independent of $\widehat\chi=\{\widehat \chi_t, t\ge 0\}$. Proposition \ref{prop} says that we have the following
decomposition of $\{\chi_t,t\ge 0\}$ under $\widetilde \bP_{\mu}$: for any
$t>0$,
\begin{equation}\label{decomp}
({\chi}_t, \widetilde \bP_{\mu}) = (\chi_t+\widehat{\chi}_t,\
\bP_{\mu,\phi})\quad\mbox{ in distribution}.
\end{equation}
Since $\{\chi_t, t\ge 0;\widetilde \bP_{\mu}\}$ is generated from the
time-homogeneous  Markov process $\{\chi_t, t\ge 0; \bP_{\mu}\}$ via a
non-negative martingale multiplicative functional, $\{\chi_t,t\ge 0;
\widetilde \bP_{\mu}\}$ is also a time-homogeneous Markov process (see
\cite[Section 62]{Sharpe}). From the construction of $\{
\widehat\chi_t, t\ge 0;\bP_{\mu,\phi}\}$ we see that $\{
\widehat{\chi}_t, t\ge 0;\bP_{\mu,\phi}\}$ is a time-homogeneous
Markov process. For a rigorous proof of  $\{ \widehat{\chi}_t, t\ge
0;\bP_{\mu,\phi}\}$ being a time-homogeneous Markov process, we refer
our readers to \cite{E}. Although the paper \cite{E} dealt with the
representation of the superprocess conditioned to stay alive
forever, one can check that the arguments there work in our case.
Therefore, \eqref{decomp} implies the following.

\begin{thm}\label{t:spine}
\begin{equation}\label{decomp2}
\{{\chi}_t, t\ge 0; \widetilde \bP_{\mu}\} = \{\chi_t+\widehat{\chi}_t, t\ge 0; \bP_{\mu,\phi}\}\quad\mbox{ in law}.
\end{equation}
\end{thm}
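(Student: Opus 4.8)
The plan is to upgrade the one-dimensional (fixed-time) identity \eqref{decomp} to an identity of processes by exhibiting both sides as time-homogeneous Markov processes with the same one-dimensional transition mechanism and the same initial law, and then invoking the standard fact that a time-homogeneous Markov process is determined in law by its initial distribution together with its transition semigroup. The key inputs have already been assembled in the excerpt: \eqref{decomp} gives equality in distribution at each fixed $t$; the left-hand process $\{\chi_t,t\ge 0;\widetilde\bP_\mu\}$ is time-homogeneous Markov because it is obtained from the time-homogeneous Markov process $\{\chi_t,t\ge 0;\bP_\mu\}$ via a nonnegative martingale multiplicative functional (namely $\langle\phi,\mu\rangle^{-1}W_t(\phi)$ through \eqref{measure-change}), and such $h$-transforms preserve the Markov property and time-homogeneity (this is \cite[Section 62]{Sharpe}); and the right-hand process $\{\chi_t+\widehat\chi_t,t\ge 0;\bP_{\mu,\phi}\}$ is time-homogeneous Markov by the construction of $\widehat\chi$ from the spine $(\widehat X,\widehat Y)$ (an ergodic, hence time-homogeneous, switched diffusion) together with the switching-caused immigration, with the rigorous Markov-property verification supplied by adapting the arguments of \cite{E}.

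First I would make precise the claim that $\{\chi_t,t\ge 0;\widetilde\bP_\mu\}$ is time-homogeneous Markov: by \eqref{measure-change} the Radon--Nikodym derivative restricted to $\cF_t$ is a nonnegative $\bP_\mu$-martingale of multiplicative-functional type, so Sharpe's general theory applies verbatim to the Borel right Markov process $\chi$ under $\bP_\mu$, yielding a time-homogeneous transition semigroup $\widetilde P_t$ under $\widetilde\bP_\mu$ whose action on exponential functionals is read off from \eqref{exp-tilde-P}. Second, I would record that the right-hand construction is time-homogeneous Markov: the pair consisting of the spine state $(\widehat X_t,\widehat Y_t)$ together with the cloud $\widehat\chi_t$ and the independent copy $\chi_t$ is built from time-homogeneous ingredients --- the ergodic switched diffusion $((X,Y),\Pi^\phi_{\phi\mu})$, the i.i.d.-given-the-spine immigration masses with law $\widetilde F(\widehat X_s,\widehat Y_s;\cdot)$ governed by the time-homogeneous L\'evy system $J$ of \eqref{def-J}, and the time-homogeneous superprocess dynamics $((X,Y),\widehat\psi)$ for each immigrated clump --- so $\{\chi_t+\widehat\chi_t,t\ge 0;\bP_{\mu,\phi}\}$ is a time-homogeneous Markov process (here one cites \cite{E} and checks, as the authors remark, that the arguments there adapt to the present setting even though \cite{E} treats the conditioned-to-survive case). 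Third, having both processes time-homogeneous Markov, it suffices to match their one-dimensional laws from every starting measure: \eqref{decomp} gives this for the initial measure $\mu$, and the semigroup identity then follows because the fixed-time identity \eqref{decomp} (which is \eqref{1inthm1'} unwound via Proposition \ref{prop}) holds for an arbitrary finite initial measure in place of $\mu$ --- inspecting the proof of Theorem \ref{theorem1}, everything there is expressed through $u^g_t$ and $\Pi^\phi_{\phi\mu}$, and replacing $\mu$ by any $\nu\in M_F(D\times S)$ with $\langle\phi,\nu\rangle<\infty$ changes nothing structurally. Equality of the semigroups plus equality of the initial laws yields \eqref{decomp2}.

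The main obstacle is the second step: rigorously establishing that $\{\chi_t+\widehat\chi_t,t\ge 0;\bP_{\mu,\phi}\}$ is genuinely a time-homogeneous \emph{Markov} process, not merely a process whose marginals agree with those of $\{\chi_t;\widetilde\bP_\mu\}$. The subtlety is that the immigration is driven by the jump times $D_J$ of the spine via a L\'evy-system integral, so one must argue that the conditional law of the future of $(\chi+\widehat\chi)$ given its past depends only on the present state $(\widehat X_t,\widehat Y_t,\widehat\chi_t,\chi_t)$ --- the past jump times that produced the current cloud must not leave any residual dependence. This is exactly the content of the adaptation of \cite{E}: one shows that the "decorated" process (spine position, immigrated cloud, independent copy) is strong Markov, and then that its projection onto the cloud-plus-copy coordinate is again Markov because the spine itself is an autonomous Markov process and the immigration rule depends on the spine only through its current position and jumps. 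Once this is in hand, everything else is a routine invocation of uniqueness of time-homogeneous Markov processes with prescribed semigroup and initial law.
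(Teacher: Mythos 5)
Your proposal follows the paper's own route: cite \cite{Sharpe} for the time-homogeneous Markov property of $\{\chi_t;\widetilde{\bP}_\mu\}$, adapt \cite{E} for that of $\{\chi_t+\widehat{\chi}_t;\bP_{\mu,\phi}\}$, and promote the fixed-time identity \eqref{decomp} to equality of process laws via equality of semigroups. You make explicit one point the paper leaves implicit---that \eqref{decomp} holds with any initial measure in place of $\mu$, which is what is actually needed to identify the two transition semigroups---but this is a clarification, not a different argument.
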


 \section{$L\log L$ criterion}\label{s:proof}

In this section, we give a proof of the main result of this paper, Theorem \ref{maintheorem}.
First, we make some preparations.

\begin{proposition}\label{equi-deg}
Let  $h(x,i)=\frac{1}{\phi(x,i)}\bP_{\delta_{(x,i)}}(W_{\infty}(\phi))$. Then
\begin{description}
\item{\rm (i)}  $h$ is a non-negative invariant function for the
process $((X, Y); \Pi^{\phi}_{(x,i)})$.

\item{\rm (ii)} Either $W_{\infty}$ is non-degenerate under $\bP_{\mu}$ for
all nonzero $\mu\in M_F(D\times S)$
or $W_{\infty}$ is degenerate under $\bP_{\mu}$
for all $\mu\in M_F(D\times S)$.
\end{description}
\end{proposition}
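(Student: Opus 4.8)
The plan is to prove (i) directly — showing $h$ is $\Pi^\phi$-invariant by identifying $P^\phi_t$ as the $h$-transform (via $\phi$) of the mean semigroup $P^{{\cal A}+B\cdot(N-I)}_t$ — and then to deduce the dichotomy (ii) from (i) by using the intrinsic ultracontractivity bound \eqref{IU''} to force the bounded invariant function $h$ to be constant.

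\emph{Part (i).} Since $W_t(\phi)$ in \eqref{martingale-1} is a non-negative $\bP_{\delta_{(x,i)}}$-martingale (Lemma \ref{martingale}), Fatou's lemma gives $\bP_{\delta_{(x,i)}}[W_\infty(\phi)] \le \bP_{\delta_{(x,i)}}[W_0(\phi)] = \phi(x,i)$, so $0 \le h \le 1$; in particular $h$ is bounded. By the branching property of $\chi$, the map $\nu \mapsto \bP_\nu[W_\infty(\phi)]$ is additive over finite measures, and a standard approximation argument (as in \cite{LRS09}) gives
\[
\bP_\nu[W_\infty(\phi)] = \langle \phi h, \nu\rangle, \qquad \nu \in M_F(D\times S).
\]
Writing $W_\infty(\phi) = e^{-\lambda_1 t}\lim_{s\to\infty} e^{-\lambda_1 s}\langle \phi, \chi_{t+s}\rangle$ and applying the Markov property of $\chi$ at time $t$ together with the previous identity, we get $\bP_{\delta_{(x,i)}}[W_\infty(\phi)\mid \cF_t] = e^{-\lambda_1 t}\,\bP_{\chi_t}[W_\infty(\phi)] = e^{-\lambda_1 t}\langle \phi h, \chi_t\rangle$; taking $\bP_{\delta_{(x,i)}}$-expectations and using the first-moment formula \eqref{expX},
\[
\phi(x,i)\,h(x,i) = e^{-\lambda_1 t}\,\bP_{\delta_{(x,i)}}\langle \phi h, \chi_t\rangle = e^{-\lambda_1 t}\,P^{{\cal A}+B\cdot(N-I)}_t(\phi h)(x,i).
\]
On the other hand, the definition \eqref{Martingale switched diffusion} of $\Pi^\phi_{(x,i)}$ shows that for every bounded Borel $u$ on $D\times S$, $\Pi^\phi_{(x,i)}[u(X_t, Y_t)] = e^{-\lambda_1 t}\phi(x,i)^{-1}\,P^{{\cal A}+B\cdot(N-I)}_t(\phi u)(x,i)$. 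Taking $u = h$ and comparing the two formulas yields $P^\phi_t h(x,i)=\Pi^\phi_{(x,i)}[h(X_t, Y_t)] = h(x,i)$ for all $t \ge 0$, i.e. $h$ is a non-negative bounded $\Pi^\phi$-invariant function.

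\emph{Part (ii).} By (i), $h \in B^+_b(D\times S)$, and since $\phi^2\,dx\times di$ is a probability measure, $h \in L^1(\phi^2\,dx\times di)$. Applying \eqref{IU''} with $f = h$ and letting $t\to\infty$ gives $P^\phi_t h(x,i) \to c := \int_{D\times S} h(y,j)\phi(y,j)^2\,dy\,dj$, while $P^\phi_t h = h$ for every $t$ by (i); hence $h \equiv c$ for some constant $c \in [0,1]$. (Alternatively, $\phi h$ is a fixed point of the compact self-adjoint operator $e^{-\lambda_1 t}P^{{\cal A}+B\cdot(N-I)}_t$, whose top eigenspace is one-dimensional and spanned by $\phi$, so again $\phi h = c\phi$.) Therefore $\bP_\mu[W_\infty(\phi)] = \langle \phi h, \mu\rangle = c\langle\phi,\mu\rangle$ for every $\mu \in M_F(D\times S)$. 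If $c > 0$, then for every nonzero $\mu$ we have $\bP_\mu[W_\infty(\phi)] = c\langle\phi,\mu\rangle > 0$ (as $\phi>0$ on $D\times S$), so $\bP_\mu(W_\infty(\phi)>0)>0$ and $W_\infty(\phi)$ is non-degenerate under every nonzero $\bP_\mu$. If $c = 0$, then $\bP_\mu[W_\infty(\phi)] = 0$ and, since $W_\infty(\phi)\ge 0$, $W_\infty(\phi) = 0$ $\bP_\mu$-a.s. for every $\mu$; so $W_\infty(\phi)$ is degenerate under every $\bP_\mu$. This gives the asserted dichotomy.

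The step I expect to be the main obstacle is the identity $\bP_\nu[W_\infty(\phi)] = \langle \phi h, \nu\rangle$ for an arbitrary — in particular random — finite measure $\nu = \chi_t$: additivity over atomic measures is immediate from the branching property, but extending it to all $\nu \in M_F(D\times S)$ requires an approximation/monotone-class argument, and one must be careful in interchanging the almost sure limit defining $W_\infty(\phi)$ with the conditional expectation in the Markov-property step (Fatou provides the bound $h \le 1$, and the martingale property supplies the remaining control). Once this identity and the $h$-transform description of $\Pi^\phi$ are in hand, both parts follow as above.
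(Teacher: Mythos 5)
Your proof of part (i) is essentially the same as the paper's: both use the Markov property of $\chi$, the first-moment formula \eqref{expX}, and the defining $h$-transform \eqref{Martingale switched diffusion} to conclude $P^\phi_t h = h$. The main difference is in part (ii). The paper argues more elementarily: it invokes the general fact that a non-negative $\Pi^\phi$-invariant function that vanishes at one point must vanish identically (a consequence of the everywhere-positivity of the transition density, established via \cite[Theorem 5.3]{CZ}), and does not use intrinsic ultracontractivity at all for this proposition. You instead push further and show that $h$ is actually \emph{constant}, either by sending $t\to\infty$ in \eqref{IU''} or, more cleanly, by observing that $\phi h$ is an $L^2$-eigenfunction of the compact self-adjoint semigroup $e^{-\lambda_1 t}P^{{\cal A}+B\cdot(N-I)}_t$ at the (simple) top eigenvalue $1$, hence proportional to $\phi$. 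Both routes are correct; yours yields the stronger conclusion $h\equiv c$ but relies on Assumption \ref{assume2} (or, in the alternative version, on compactness and boundedness of $\phi h$), whereas the paper's argument needs only the positivity of the transition density and hence is a bit more portable. Your flagging of the extension $\bP_\nu[W_\infty(\phi)]=\langle\phi h,\nu\rangle$ to random measures $\nu=\chi_t$ is reasonable; the paper uses exactly this step without further comment, so you are not missing anything the paper supplies.
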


\begin{proof} (i)  By the Markov property of $\chi$,
\begin{eqnarray*}
h(x,i)&=&\frac{1}{\phi(x,i)}\bP_{\delta_{(x,i)}}\left[\lim_{s\to\infty} \langle
e^{-\lambda_1(t+s)}\phi,
\chi_{t+s}\rangle\right]\\
&=&\frac{e^{-\lambda_1 t}}{\phi(x,i)}\bP_{\delta_{(x, i)}}\left[\bP_{\chi_t}
(\lim_{s\to\infty}\langle e^{-\lambda_1s}\phi,\chi_s\rangle)\right]\\
&=&\frac{e^{-\lambda_1 t}}{\phi(x,i)}\bP_{\delta_{(x,i)}}\left[\bP_{\chi_t}(
W_{\infty})\right]\ =\ \frac{e^{-\lambda_1
t}}{\phi(x,i)}\bP_{\delta_{(x,i)}}\left[\langle
(h\phi), \chi_t\rangle\right]\\
&=&\frac{e^{-\lambda_1 t}}{\phi(x,i)}P^{{\cal A}+B\cdot(N-I)}_t(h\phi),\quad x\in D.
\end{eqnarray*}
By the definition of $\Pi^{\phi}_{(x,i)}$, we get that
$h(x,i)=\Pi^{\phi}_{(x,i)}[h(X_t, Y_t)]$. So  $h$ is an invariant function of
the process $((X, Y); \Pi^{\phi}_{(x, i)})$. The non-negativity of $h$ is
obvious.

(ii) Since $h$ is non-negative and invariant, if there exists $(x_0, i)\in
D\times S$ such that $h(x_0,i)=0$, then $h\equiv 0$ on $D\times S$. Since
$\bP_{\mu}(W_{\infty}(\phi))=\langle h\phi, \mu\rangle,$ we then have
$\bP_{\mu}(W_{\infty}(\phi))=0$ for any $\mu\in M_F(D\times S)$.  If $h>0$ on
$D\times S$, then $\bP_{\mu}(W_{\infty}(\phi))>0$ for any
nonzero $\mu\in M_F(D\times S)$.
\qed\end{proof}

Using Proposition \ref{equi-deg} we see that, to prove Theorem
\ref{maintheorem}, we only need to consider the case
$d\mu={\phi}(x,i)dxdi$, where $di$ is the counting measure on $S$. So in the remaining part of this
paper we will always suppose that $d\mu={\phi}(x,i)dxdi$.

Recall from \eqref{e:dofpi(phi)} and \eqref{def-l} that
$$
\pi(x,i;\phi)=\sum^K_{j=1}p^{i}_j(x)\phi(x,j),\quad (x,i)\in D\times S
$$
and
$$
l(x,i)=\int_0^\infty r\log^+(r)F^{\pi(\phi)}(x,i; dr)=\int_0^\infty r\pi(x, i, \phi)
\log^+(r\pi(x, i, \phi))F(x,i; dr).
$$

\begin{lemma}\label{lemma1}
Let $(m_t;\ t\in D_J)$ be the Poisson point process constructed in
Section 4, given the path of $(\widehat X_s, \widehat Y_s), s\ge 0$.   Define
$$
\sigma_0=0,\quad \sigma_i=\inf\{s\in D_J;\ s>\sigma_{i-1},\
m_s\pi(\widehat X_s, \widehat Y_s;\phi)>1\}, \quad\eta_i=m_{\sigma_i},\quad
i=1,2,\cdots
$$

\begin{description}
\item{\rm (i)} If $\displaystyle\sum^K_{i=1}\int_D{\phi}(y,i)b(y,i) l(y,i)dy<\infty$, then
\begin{equation}\label{e:5.1'}
\sum_{s\in D_J}\mbox{e}^{-\lambda_1s}m_s\pi(\widehat X_s, \widehat Y_s;\phi) < \infty, \quad
\bP_{\mu,\phi}\mbox{-a.s.}
\end{equation}

\item{\rm (ii)} If $\displaystyle \sum^K_{i=1}\int_D{\phi}(y,i)b(y,i)l(y,i)dy=\infty$,  then
\begin{equation}\label{e:5.1}
\limsup_{i\rightarrow\infty}e^{-\lambda_1\sigma_i}\eta_i
\pi(\widehat X_{\sigma_i}, \widehat Y_{\sigma_i};\phi) =\infty,\quad \bP_{\mu,\phi}\mbox{-a.s.}
\end{equation}
\end{description}
\end{lemma}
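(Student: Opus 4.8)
The plan is to view the quantity $\sum_{s\in D_J} e^{-\lambda_1 s} m_s\,\pi(\widehat X_s,\widehat Y_s;\phi)$ as the integral of a suitable functional against a Poisson random measure, and to apply the standard integrability criterion for Poisson integrals, conditionally on the path of the spine $(\widehat X,\widehat Y)$. Concretely, conditioned on $\{(\widehat X_s,\widehat Y_s):s\ge 0\}$, the pairs $\{(s,m_s):s\in D_J\}$ form (by the construction in Section \ref{s:spine}) a Poisson point process whose intensity is, at a jump time $s$ with $(\widehat X_s,\widehat Y_s)=(y,i)$, governed by the jumping rate $\frac{bn\pi(\phi)}{\phi}(y,i)$ of $(X,Y)$ under $\Pi^\phi_{\phi\mu}$ together with the mass law $\widetilde F(y,i;dr)$. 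The key reduction is therefore to translate the condition $\sum_{i}\int_D\phi(y,i)b(y,i)l(y,i)\,dy<\infty$ into a statement about $\int \big(1\wedge e^{-\lambda_1 s} r\,\pi(y,i;\phi)\big)$ integrated against this Poisson intensity. First I would record that, because $\widetilde F$ differs from $\frac{1}{n}\,u\,F$ only by an atom at $0$ (see \eqref{def-n}), the relevant part of the mass intensity at $(y,i)$ is $\frac{1}{n(y,i)}\,r\,F(y,i;dr)$, so the effective intensity for the points $(s,r)$ is $b(y,i)\,\pi(y,i;\phi)\,\phi(y,i)^{-1}\cdot r\,F(y,i;dr)\,ds$ along the spine; after the change of variables $r\mapsto r\pi(y,i;\phi)$ this is exactly the kernel $F^{\pi(\phi)}$ from the definition just before \eqref{def-l}.

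For part (i), I would invoke the ergodicity of $(\widehat X,\widehat Y)$ under $\Pi^\phi_{\phi\mu}$ (it is conservative and $\phi^2\,dy\,di$ is its invariant probability, by the discussion around \eqref{transformed swithed diffu}) to control the time-integral of the intensity: by the ergodic theorem the occupation measure of $(\widehat X,\widehat Y)$ over $[0,T]$ is asymptotically $T\,\phi^2(y,i)\,dy\,di$. Combined with the exponential damping $e^{-\lambda_1 s}$ and $\lambda_1>0$, and using $\log^+(ab)\le \log^+ a+\log^+ b$ plus the boundedness of $\phi$ (Assumption \ref{assume1}), the expected value of $\sum_{s\in D_J} (1\wedge e^{-\lambda_1 s} m_s\pi(\widehat X_s,\widehat Y_s;\phi))$ is dominated by a constant times $\sum_i\int_0^\infty e^{-\lambda_1 s}\,ds \cdot \int_D \phi^2(y,i)\cdot\frac{b\pi(\phi)}{\phi}(y,i)\int_0^\infty (1\wedge e^{-\lambda_1 s} r)\,r\,F(y,i;dr)\,dy$, which after the Fubini/change-of-variable bookkeeping is controlled by $\sum_i\int_D \phi(y,i)\,b(y,i)\,l(y,i)\,dy<\infty$ (the $\log^+$ appearing precisely from $\int_0^\infty e^{-\lambda_1 s}(1\wedge e^{-\lambda_1 s}r)\,ds \asymp 1\wedge\log^+ r$ up to constants). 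Hence the sum of the jointly bounded truncations is finite a.s., and since a.s. only finitely many terms exceed the truncation level (again by Borel--Cantelli, as the large-jump intensity is summable), \eqref{e:5.1'} follows.

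For part (ii) I would argue by contradiction: if $\limsup_i e^{-\lambda_1\sigma_i}\eta_i\pi(\widehat X_{\sigma_i},\widehat Y_{\sigma_i};\phi)<\infty$ on a set of positive probability, then along that event the points $(\sigma_i,\eta_i)$ — which by definition are exactly the Poisson points with $m_s\pi(\widehat X_s,\widehat Y_s;\phi)>1$ — all lie in a region $\{(s,r): e^{-\lambda_1 s} r \le C\}$, so the conditional Poisson intensity of that complementary region must be finite; this is a zero-one / Borel--Cantelli type dichotomy for Poisson point processes. I would then show, again using the ergodic theorem for $(\widehat X,\widehat Y)$ under $\Pi^\phi_{\phi\mu}$, that the divergence $\sum_i\int_D\phi(y,i)b(y,i)l(y,i)\,dy=\infty$ forces that conditional intensity to be infinite $\Pi^\phi_{\phi\mu}$-a.s., giving the contradiction. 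The main obstacle, and where I would spend the most care, is the interchange of the conditioning on the spine path with the ergodic averaging: one must show the a.s.\ divergence (resp.\ convergence) of a path-dependent intensity integral $\int_0^\infty g(\widehat X_s,\widehat Y_s)\,ds$ with $g$ possibly unbounded near $\partial D$, so the integrability in $L^1(\phi^2\,dy\,di)$ of $g(y,i)=\frac{b\pi(\phi)}{\phi}(y,i)\int(1\wedge e^{-\lambda_1 s}r)rF(y,i;dr)$ must be matched carefully against the equivalence $\phi(y,i)\asymp\delta_D(y)$ from Section \ref{s:iu}, and the $\limsup=\infty$ conclusion in \eqref{e:5.1} requires upgrading a divergent-intensity statement to a pathwise $\limsup$ via the second Borel--Cantelli lemma for the independent increments of the conditional Poisson process.
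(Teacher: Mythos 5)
Your framing of both parts as statements about a marked Poisson point process along the spine, with the intensity you identify, matches the structure of the paper's proof, and the reduction to the kernel $F^{\pi(\phi)}$ is exactly right. Two issues.

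For part (i), the approach (Campbell-type integrability criterion: the sum is finite iff $\int(1\wedge g)\,d\nu<\infty$, plus a Borel--Cantelli count of the large atoms) is sound and arguably cleaner than the paper's two-term split with a free parameter $\varepsilon<\lambda_1$. However, the displayed identity you invoke to produce the $\log^+$, namely $\int_0^\infty e^{-\lambda_1 s}\bigl(1\wedge e^{-\lambda_1 s}r\bigr)\,ds\asymp 1\wedge\log^+ r$, is false: the left-hand side is bounded by $1/\lambda_1$ uniformly in $r$, so it cannot produce a $\log^+$. The correct computation (which is what you actually need, since the Poisson criterion has no damping factor outside the bracket) is $\int_0^\infty\bigl(1\wedge e^{-\lambda_1 s}\rho\bigr)\,ds=\lambda_1^{-1}\bigl(\log^+\rho+\min(\rho,1)\bigr)$; integrating this against $\rho\,F^{\pi(\phi)}(y,j;d\rho)$ and then against $b\phi\,dy\,dj$ gives $\lambda_1^{-1}\sum_j\int_D(b\phi)(y,j)l(y,j)\,dy$ plus a bounded term. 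With that repair, part (i) goes through.

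For part (ii) there is a genuine gap. You need to show that the conditional (on the spine) intensity of the region $\{(s,r):e^{-\lambda_1 s}r\pi(\widehat X_s,\widehat Y_s;\phi)>L\}$ is $\Pi^\phi_{\phi\mu}$-a.s. infinite, i.e. that
$\xi_\infty:=\int_0^\infty \tfrac{b\pi(\phi)}{\phi}(X_t,Y_t)\int_{L\pi(X_t,Y_t;\phi)^{-1}e^{\lambda_1 t}}^\infty r\,F(X_t,Y_t;dr)\,dt=\infty$ a.s. You propose to get this from ``the ergodic theorem,'' but the integrand is not a fixed function of $(X_t,Y_t)$: the lower limit $L\pi^{-1}e^{\lambda_1 t}$ grows in $t$, so Birkhoff's theorem for time averages $\tfrac1T\int_0^T g(X_t,Y_t)\,dt$ does not apply directly, and knowing $\Pi^\phi_{\phi\mu}[\xi_\infty]=\infty$ is not by itself enough to conclude $\xi_\infty=\infty$ a.s. The paper closes this gap with a second-moment (Paley--Zygmund, via \cite[Exercise 1.3.8]{D2}) argument: using the intrinsic ultracontractivity bound \eqref{IU''} on the transformed transition density $p^\phi$, one shows $\Pi^\phi_{\phi\mu}[\xi_T^2]\le c_2(\Pi^\phi_{\phi\mu}[\xi_T])^2$ uniformly in $T>t_0$, hence $\Pi^\phi_{\phi\mu}(\xi_\infty=\infty)>0$, and only then is ergodicity used -- via the observation that $\{\xi_\infty=\infty\}$ is an invariant event, so positive probability upgrades to probability one. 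Your sketch mentions the difficulty but supplies neither the second-moment control nor the invariant-event step, both of which are essential.
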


\begin{proof}
Since $\phi$ is bounded from above, $\sigma_i$ is strictly
increasing with respect to $i$.

(i) Suppose that $\sum_{i=1}^K\int_D\phi(y,i)b(y,i) l(y,i)dy<\infty$.
For any $\varepsilon>0$, we write the sum above as
\begin{align}\label{sum}
&\sum_{s\in \mathcal D_J}\mbox{e}^{-\lambda_1s}m_s\pi(\widehat X_s, \widehat Y_s;\phi)\nonumber\\
&=\sum_{s\in \mathcal D_J}
\mbox{e}^{-\lambda_1s}m_s\pi(\widehat X_s, \widehat Y_s;\phi)
1_{\{m_s\pi(\widehat X_s, \widehat Y_s;\phi)\le\mbox{e}^{\varepsilon
s}\}}\nonumber\\
&\quad +\sum_{s\in \mathcal
D_J}\mbox{e}^{-\lambda_1s}m_s\pi(\widehat X_s, \widehat Y_s;\phi)
1_{\{m_s\pi(\widehat X_s, \widehat Y_s;\phi)>\mbox{e}^{\varepsilon
s}\}}\nonumber\\
&=\sum_{s\in \mathcal
D_J}\mbox{e}^{-\lambda_1s}m_s\pi(\widehat X_s, \widehat Y_s;\phi) 1_{\{
\pi(\widehat X_s, \widehat Y_s;\phi)m_s\le \mbox{e}^{\varepsilon s}
\}}\nonumber\\
&\quad+\sum_{i=1}^\infty\mbox{e}^{-\lambda_1\sigma_i}\eta_i
\pi(\widehat X_{\sigma_i}, \widehat Y_{\sigma_i};\phi)
1_{\{\eta_i\phi(\pi(\widehat X_{\sigma_i}, \widehat Y_{\sigma_i};\phi) >\mbox{e}^{\varepsilon
\sigma_i}\}}\nonumber\\&=I+II.
\end{align}
Note that the jumping intensity of
$\{(\widehat X, \widehat Y), \bP_{\mu,\phi}\}$
is $\frac{bn\pi(\phi)}{\phi}(x, i)$
at $(x,i)\in D\times S$.
Thus
\begin{eqnarray*}
&&\sum_{i=1}^\infty \bP_{\mu,\phi}
\left(\eta_i\pi(\widehat X_{\sigma_i}, \widehat Y_{\sigma_i};\phi)  > \mbox{e}^{\varepsilon
\sigma_i}\right)\\
&=&\sum_{i=1}^\infty
\bP_{\mu,\phi}\left[\bP_{\mu,\phi}\left(\eta_i
\pi(\widehat X_{\sigma_i}, \widehat Y_{\sigma_i};\phi) >\mbox{e}^{\varepsilon
\sigma_i}\big|\sigma({\widehat X},{\widehat Y})\right)\right]\\
&=&\bP_{\mu,\phi}\left[\bP_{\mu,\phi}\left(\sum_{i=1}^\infty
1_{\{\eta_i>\mbox{e}^{\varepsilon \sigma_i}
\pi(\widehat X_{\sigma_i}, \widehat Y_{\sigma_i};\phi) ^{-1}\}}\Big|
\sigma({\widehat X}, {\widehat Y})\right)\right]\\
&=&\Pi^{\phi}_{\phi\mu}\left[\int_0^\infty(bn\pi(\phi)/\phi)(X_{s}, Y_{s})
\left(\int^{\infty}_{\pi(X_{s},  Y_{s};\phi) ^{-1}\mbox{e}^{\epsilon s}}
\widetilde F(X_s, Y_s; dr)\right)ds\right].
\end{eqnarray*}
Recall that under $\Pi^{\phi}_{\phi\mu}$, $(X, Y)$ starts at the
invariant measure $\phi^2(x,i)dxdi$. By the definition of $\widetilde F$ given in \eqref{def-n},
\begin{eqnarray*}
&&\sum_{i=1}^\infty \bP_{\mu,\phi} \left(\eta_i\pi(X_{\sigma_i}, Y_{\sigma_i};\phi)  > \mbox{e}^{\varepsilon \sigma_i}\right)\\
&=&\int_0^\infty
ds\sum^K_{j=1}\int_Ddy(b\phi)(y,j)\int^{\infty}_{\pi(y,j;\phi)^{-1}
\mbox{e}^{\epsilon s}}\pi(y,j;\phi)r\, F(y,j; dr)\\
&=& \sum^K_{j=1}\int_D(b \phi)(y,j)dy\int_{\pi(y,j;\phi)^{-1}}^\infty
\pi(y,j;\phi)r\, F(y,j; dr)\int^{\frac{\ln (r\pi(y,j;\phi))}{\epsilon}}_{0}ds\\
&=&\varepsilon^{-1}\sum^K_{j=1}\int_D(b{\phi})(y,j)l(y,j)dy.
\end{eqnarray*}
 By the assumption that $\sum^K_{j=1}\int_D(b{\phi})(y,j) l(y,j)dy<\infty$ and
the Borel-Cantelli Lemma,  we get
\begin{equation}\label{io}
\bP_{\mu,\phi}\Big(\eta_i\pi(\widehat X_{\sigma_i}, \widehat Y_{\sigma_i};\phi)>\mbox{e}^{\varepsilon \sigma_i} \mbox{ i. o.}\Big)=0
\end{equation}
for all $\varepsilon>0$,  which implies that
\begin{equation}\label{big}
II<\infty.\quad \bP_{\mu,\phi}\mbox{-a.s.}
\end{equation}
Meanwhile for $\varepsilon<\lambda_1$,
$$
\begin{array}{rl}
\bP_{\mu,\phi}I&=\displaystyle \bP_{\mu,\phi} \left[\sum_{s\in\mathcal
D_J} \mbox{e}^{-\lambda_1s}m_s\pi(\widehat X_s,  \widehat Y_s;\phi)  1_{\{m_s\le
\mbox{e}^{\varepsilon s} \pi(\widehat X_s, \widehat Y_s;\phi) ^{-1}\}}\right]\\
&=\displaystyle \Pi_{\phi\mu}^\phi\int_0^\infty dt
\mbox{e}^{-\lambda_1t}\int_0^{\pi(X_t, {Y}_t;\phi)^{-1}
\mbox{e}^{\varepsilon t}}\frac{bn\pi(\phi)}{\phi}(X_t,  Y_t)\pi(X_t, {Y}_t;\phi)r\widetilde F(X_t, {Y}_t;dr)\\
&\le \displaystyle
\Pi_{\phi\mu}^\phi\int_0^\infty dt
\mbox{e}^{-(\lambda_1-\varepsilon)t}\int_0^{\infty}\frac{b\pi(\phi)}{\phi}(X_t, Y_t)rF(X_t, {Y}_t; dr),
\end{array}
$$
 where for the  inequality above we used the fact
that $r\le\pi(X_t, {Y}_t,\phi)^{-1} \mbox{e}^{\varepsilon t}$ implies that
$r\pi(X_t, {Y}_t,\phi)\le \mbox{e}^{\varepsilon t}$. By the assumption that
$\sup_{(x,i)\in D\times S}\int_0^\infty r F(x,i, dr)<\infty$, we have
$$
\begin{array}{rll}\bP_{\mu,\phi}I&\le&\displaystyle\frac{1}{\lambda_1-\epsilon}\sum^K_{i=1}\int_D b(y, i)\pi(y,i;\phi)\phi(y,i)\int_0^\infty rF(y,i, dr)dy\\
&\le&\displaystyle
\frac{1}{\lambda_1-\epsilon}\|\int_0^\infty rF(y,i, dr)\|_\infty
\|b\|_\infty<\infty.\end{array}
$$
Thus
\begin{equation}\label{small}
I<\infty,\quad \bP_{\mu,\phi}\mbox{-a.s.}
\end{equation}
Combining \eqref{sum}, \eqref{big} and \eqref{small},
we obtain \eqref{e:5.1'}.

(ii) Next, we assume $ \sum^K_{i=1}\int_D(b{\phi})(y,i)l(y,i)dy=\infty$.
To establish \eqref{e:5.1}, it suffices to show that for  any $L>0$,
\begin{equation}\label{>K}
\limsup_{i\rightarrow\infty}e^{-\lambda_1\sigma_i}\eta_i
\pi(\widehat X_{\sigma_i}, \widehat Y_{\sigma_i};\phi)
>L,\quad \bP_{\mu,\phi}\mbox{-a.s.}
\end{equation}
Put $L_0:=1\vee(\max_{(x,i)\in D\times S}\phi(x,i))$. Then for $L\ge L_0$,
$$
 L\inf_{(x,i)\in D\times S}\phi(x,i)^{-1}\geq 1.
$$
Note that for any $T\in (0,\infty)$, conditional on
$\sigma({\widehat X}, {\widehat Y})$,
$$
\sharp\{i: \sigma_i\in(0, T];
 \eta_i>L\pi(\widehat X_{\sigma_i},
\widehat Y_{\sigma_i};\phi)^{-1}\mbox{e}^{\lambda_1\sigma_i}\}
$$
is a Poisson random variable with parameter
$$
\int_0^Tdt(b\pi(\phi)/\phi)(\widehat X_t,{\widehat Y}_t)
 \int_{L\pi(\widehat X_t, {\widehat Y}_t;\phi)^{-1}
\mbox{e}^{\lambda_1t}}^\infty r
F(\widehat X_t, {\widehat Y}_t; dr).
$$
Since $(\widehat X,  \widehat Y; \bP_{\mu,\phi})$
has the same law as $(X,  Y; \Pi^{\phi}_{\mu\phi})$, we have
$$
\begin{array}{rl}
&\displaystyle \bP_{\mu,\phi}\int_0^Tdt
\frac{b\pi(\phi)}{\phi}(\widehat X_t, {\widehat Y}_t)
 \int_{L\pi(\widehat X_t, {\widehat Y}_t;\phi)^{-1}\mbox{e}^{\lambda_1t}}^\infty
r F(\widehat X_t, {\widehat Y}_t;
dr)\\
=&\displaystyle\int^T_0dt\sum^K_{j=1}\int_Ddy(b\pi(\phi)\phi)(y,j)
 \int_{L\pi(y,j;\phi)^{-1}\mbox{e}^{\lambda_1t}}^\infty
r F(y,j; dr)<\infty,\end{array}
$$
thus
$$
\int_0^Tdt\frac{b\pi(\phi)}{\phi}(\widehat X_t, {\widehat Y}_t)
 \int_{L\pi(\widehat X_t, {\widehat Y}_t;\phi)^{-1}\mbox{e}^{\lambda_1t}}^\infty
rF(\widehat X_t, {\widehat Y}_t;
dr)<\infty,\quad \bP_{\mu,\phi}\mbox{-a.s.}
$$
Consequently we have
\begin{equation}\label{number-finite}
\sharp\Big\{i: \sigma_i\in(0,T];
\eta_i>L\pi(\widehat X_t, {\widehat Y}_t;\phi)^{-1}\mbox{e}^{\lambda_1\sigma_i}
\Big\}<\infty,\quad
\bP_{\mu,\phi}\mbox{-a.s.}
\end{equation}
So, to prove \eqref{>K},
 we need to prove
$$
\int_0^\infty dt\frac{b\pi(\phi)}{\phi}(\widehat X_t, {\widehat Y}_t)
\int_{L\pi(\widehat X_t, {\widehat Y}_t;\phi)^{-1}\mbox{e}^{\lambda_1t}}^\infty
rF(\widehat X_t, {\widehat Y}_t;
dr)=\infty,\quad \bP_{\mu,\phi}\mbox{-a.s.}
$$
which is equivalent to
\begin{eqnarray}\label{=infinity}
\int_0^\infty dt\frac{b\pi(\phi)}{\phi}(X_t, {Y}_t)
\int_{L\pi(X_t, Y_t;\phi)^{-1}\mbox{e}^{\lambda_1t}}^\infty
r F(X_t, {Y}_t; dr)=\infty,\quad \Pi^{\phi}_{\phi\mu}\mbox{-a.s.}
\end{eqnarray}

For this purpose we first prove that
\begin{equation}\label{mean=infty}
\Pi^{\phi}_{\phi\mu}\left[\int_0^\infty dt\frac{b\pi(\phi)}{\phi}(X_t, {Y}_t)
\int_{L\pi(X_t,  Y_t;\phi)^{-1}\mbox{e}^{\lambda_1t}}^\infty
rF(X_t, {Y}_t; dr)\right]=\infty.
\end{equation}
Applying Fubini's theorem, we get
\begin{eqnarray*}
&&\Pi^{\phi}_{\phi\mu}\left[\int_0^\infty dt\frac{b\pi(\phi)}{\phi}(X_t, {Y}_t)
\int_{L\pi(X_t,  Y_t;\phi)^{-1}\mbox{e}^{\lambda_1t}}^\infty
r F(X_t, {Y}_t; dr)\right]\nonumber\\
&=& \sum^K_{j=1}\int_Db(y,j)\pi(y,j;\phi){\phi}(y,j)dy\int_0^\infty dt
\int_{L\pi(y,j;\phi)^{-1}\mbox{e}^{\lambda_1t}}^\infty r F(y,j; dr)\\
&=&\sum_{j=1}^K \int_Db(y,j)\pi(y,j;\phi){\phi}(y,j)dy
\int_{L \pi(y,j;\phi)^{-1}}^\infty
rF(y,j;dr)\int_0^{\frac{1}{\lambda_1}
\ln(\frac{r\pi(y,j;\phi)}{L})}dt\\
&=&\sum_{j=1}^K\frac{1}{\lambda_1}\int_Db(y,j)\pi(y,j;\phi)
{\phi}(y,j)dy
\int_{L\pi(y,j;\phi)^{-1}}^\infty
(\ln[r\pi(y,j;\phi)]-\ln L)r F(y,j; dr)\\
 &\ge&\sum_{j=1}^K\frac{1}{\lambda_1}\int_Db(y,j)\pi(y,j;\phi)
{\phi}(y,j)dy\left[ \int_{L\pi(y,j;\phi)^{-1}}^\infty
r\ln[r\pi(y,j;\phi)]F(y, j; dr)-A\right]\\
&=&\sum_{j=1}^K\frac{1}{\lambda_1}\int_D(b{\phi})(y,j)dy
\int_{L}^\infty
r\ln r\,F^{\pi(\phi)}(y,j; dr) -\sum_{j=1}^K
\frac{A}{\lambda_1}\int_D(b{\phi})(y,j)\pi(y, j;\phi)dy,
\end{eqnarray*}
for some constant $A>0$, where in the inequality we used the
facts that $L\pi(y,j;\phi)^{-1}>1$ for any $(y,j)\in D\times S$
and $\sup_{(y, j)\in
D\times S}\int^\infty_1rF(y,j; dr)<\infty$. It is easy to see that
$$
\sum_{j=1}^K
\frac{A}{\lambda_1}\int_D(b{\phi})(y,j)\pi(y, j;\phi)dy\le\frac{A}{\lambda_1}\|b\|_\infty<\infty.
$$
Since
$$
\sum_{j=1}^K\int_D(b{\phi})(y,j)dy\int_1^\infty r\ln r\,F^{\pi(\phi)}(y,j; dr)=\infty,
$$
and
\begin{eqnarray*}&&
\sum_{j=1}^K\int_D(b{\phi})(y,j)dy
\int_1^Lr\ln r\,F^{\pi(\phi)}(y,j; dr)\\
&\leq&L\ln L\sum^K_{j=1}\int_D (b{\phi})(y,j)F(y,j;
[\|\phi\|_{\infty}^{-1},\infty))dy<\infty,
\end{eqnarray*}
we get that
$$
\sum_{j=1}^K\int_D(b{\phi})(y,j)dy
\int_L^\infty  r\ln r\,F^{\pi(\phi)}(y,j; dr)=\infty,
$$
and therefore, \eqref{mean=infty} holds.

 By \eqref{IU''}, there exists constant $t_0>0$ such that for
any $t>t_0$ and any
$f\in B^{+}_b(D\times S)$,
\begin{eqnarray}\label{domi-p}
\frac{1}{2}\int_{D\times S}\phi^2(y,j)f(y,j)dydi&\leq&
\int_{D\times S}p^{\phi}(t, (x,i), (y,j))f(y,j)dydi\nonumber\\
&\leq&
2\int_{D\times S}\phi^2(y,j)f(y,j)dydi
\end{eqnarray}
holds for any $ (x, i)\in D\times S$.  For $T>t_0$, we define
$$
\xi_T=\int_0^T dt\frac{b\pi(\phi)}{\phi}(X_t, {Y}_t)
 \int_{L\pi(X_t,  Y_t; \phi)^{-1}\mbox{e}^{\lambda_1t}}^\infty
r F(X_t,  {Y}_t;dr)
$$
and
$$
A_T=\sum_{j=1}^K
 \int_{t_0}^Tdt\int_D(b{\phi})(y,j)dy\int_{L\mbox{e}^{\lambda_1t}}^\infty
r F^{\pi(\phi)}(y,j; dr).
$$
Our goal is to prove \eqref{=infinity}, which is equivalent to
\begin{eqnarray}
\xi_\infty:=\int_0^\infty dt\frac{b\pi(\phi)}{\phi}(X_t, {Y}_t)
 \int_{L\pi(X_t, Y_t;\phi)^{-1}\mbox{e}^{\lambda_1t}}^\infty
r F(X_t, {Y}_t; dr)=\infty, \qquad \Pi_{\phi\mu}^\phi\mbox{-a.s.}
\end{eqnarray}
Since $\left\{\xi_\infty=\infty\right\}$ is an invariant event, by
 the ergodic property of $\{(X,  Y),\Pi^{\phi}_{\phi\mu}\}$, it is
enough to prove
\begin{equation}\label{positive-prob}
\Pi^\phi_{\phi\mu}\left(\xi_\infty=\infty\right)>0.
\end{equation}
Note that
\begin{equation}\label{domi-mean}
\Pi_{\phi\mu}^\phi \xi_T=\sum^K_{j=1}\int_0^Tdt\int_D
(b{\phi})(y,j)dy
 \int_{L\mbox{e}^{\lambda_1t}}^\infty
r F^{\pi(\phi)}(y,j; dr)\ge A_T
\end{equation}
and
\begin{align}\label{mean-limit}
&\lim_{T\rightarrow\infty}\Pi_{\phi\mu}^\phi \xi_T\ge
A_\infty=\sum^K_{j=1}
 \int_{t_0}^\infty dt\int_D(b{\phi})(y,j)dy
\int_{L\mbox{e}^{\lambda_1t}}^\infty
rF^{\pi(\phi)}(y,j; dr)\nonumber\\
&=\sum^K_{j=1}\int_D(b\phi)(y,j)dy
 \int_{Le^{\lambda_1t_0}}^\infty
\left(\frac{1}{\lambda_1}(\ln r-\ln L)-t_0\right)
r F^{\pi(\phi)}(y,j; dr)\nonumber\\
 &\ge c\sum^K_{j=1}\int_D(b\phi)(y,j)l(y,j)dy=\infty,
\end{align}
 where $c$ is a positive constant. By \cite[Exercise 1.3.8]{D2},
\begin{equation}\label{Durrett-domi}
\Pi_{\phi\mu}^\phi\left(\xi_T\geq \frac{1}{2}\Pi_{\phi\mu}^\phi
\xi_T\right)\geq \frac{(\Pi_{\phi\mu}^\phi
\xi_T)^2}{4\Pi_{\phi\mu}^\phi(\xi_T^2)}.
\end{equation}
 If we can prove that there is a constant $\widehat c>0$ such that
 or all $T>t_0$,
\begin{eqnarray}\label{uniform lower bound}
\frac{(\Pi_{\phi\mu}^\phi
\xi_T)^2}{4\Pi_{\phi\mu}^\phi(\xi_T^2)}\geq
 \widehat c.
\end{eqnarray}
Then by \eqref{Durrett-domi}  we would get
$$
\Pi_{\phi\mu}^\phi\left(\xi_T\geq \frac{1}{2}\Pi_{\phi\mu}^\phi
\xi_T\right)  \geq \widehat c,
$$
and therefore
\begin{eqnarray*}
&&\Pi^\phi_{\phi\mu}\left(\xi_\infty\geq
\frac{1}{2}\Pi_{\phi\mu}^\phi \xi_T\right)
\geq\Pi^\phi_{\phi\mu}\left(\xi_T\geq \frac{1}{2} \Pi_{\phi\mu}^\phi
\xi_T\right)  \ge\widehat c>0.
\end{eqnarray*}
Since $\lim_{T\rightarrow\infty}\Pi_{\phi\mu}^\phi \xi_T=\infty$
(see \eqref{mean-limit}), the above inequality implies
\eqref{positive-prob}.  Now we only need to prove
 \eqref{uniform lower bound}. For this purpose we first estimate
$\Pi_{\phi\mu}^\phi(\xi_T^2)$:
\begin{eqnarray*}
\Pi_{\phi\mu}^\phi
\xi_T^2&=&\Pi_{\phi\mu}^\phi\int_0^Tdt  \int_{L\pi(X_t,  Y_t; \phi)^{-1}
\mbox{e}^{\lambda_1t}}^\infty
\frac{b\pi(\phi)}{\phi}(X_t, {Y}_t)rF(X_t, {Y}_t; dr)\\
&&\quad\times\int_0^Tds
\int_{L\pi(X_s,  Y_s;\phi)^{-1}\mbox{e}^{\lambda_1s}}^\infty
\frac{b\pi(\phi)}{\phi}(X_s, {Y}_s)uF(X_s, {Y}_s; du)\\
&=&2\Pi_{\phi\mu}^\phi\int_0^Tdt
\int_{L\pi(X_t,  Y_t; \phi)^{-1}
\mbox{e}^{\lambda_1t}}^\infty\frac{b\pi(\phi)}{\phi}(X_t, {Y}_t)
r F(X_t, {Y}_t; dr)\\
&&\quad\times \int_t^Tds
\int_{L\pi(X_s,  Y_s; \phi)^{-1}\mbox{e}^{\lambda_1s}}^\infty
\frac{b\pi(\phi)}{\phi}(\widehat X_s,\widehat{Y}_s)u F(X_s, {Y}_s; du)\\
&=&2\Pi_{\phi\mu}^\phi\int_0^Tdt
\int_{L\pi(X_t, Y_t; \phi)^{-1}\mbox{e}^{\lambda_1t}}^\infty
\frac{b\pi(\phi)}{\phi}(X_t, {Y}_t)r F(X_t, {Y}_t; dr)\\
&&\quad\times
\int_t^{(t+t_0)\wedge T}ds\int_{L\pi(X_s, Y_s; \phi)^{-1}
\mbox{e}^{\lambda_1s}}^\infty\frac{b\pi(\phi)}{\phi}(X_s, {Y}_s) uF(X_s,  {Y}_s; du)\\
&&+2\Pi_{\phi\mu}^\phi\int_0^{ T}dt
\int_{L\pi(X_t,  Y_t; \phi)^{-1}\mbox{e}^{\lambda_1t}}^\infty
\frac{b\pi(\phi)}{\phi}(X_t, {Y}_t) r F(X_t, {Y}_t; dr)\\
&&\quad\times
\int_{(t+t_0)\wedge T}^Tds\int_{L\pi(X_s,  Y_s; \phi)^{-1}
\mbox{e}^{\lambda_1s}}^\infty \frac{b\pi(\phi)}{\phi}(X_s, {Y}_s)u F(X_s, {Y}_s; du)\\
&=&III+IV,
\end{eqnarray*}
where
\begin{eqnarray*}
III&=&\displaystyle 2\Pi_{\phi\mu}^\phi\int_0^Tdt
\int_{L\pi(X_t,  Y_t; \phi)^{-1} \mbox{e}^{\lambda_1t}}^\infty
\frac{b\pi(\phi)}{\phi}(X_t, {Y}_t)r F(X_t,{Y}_t; dr)\\
&&\quad\times
\int_t^{(t+t_0)\wedge T}ds\int_{L\pi(X_s,  Y_s; \phi)^{-1}
\mbox{e}^{\lambda_1s}}^\infty \frac{b\pi(\phi)}{\phi}(X_s, {Y}_s)u F(X_s, {Y}_s;du)
\end{eqnarray*}
and
\begin{align*}
IV= & 2\Pi_{\phi\mu}^\phi\int_0^{T}dt
\int_{L\pi(X_t,  Y_t; \phi)^{-1}\mbox{e}^{\lambda_1t}}^\infty
\frac{b\pi(\phi)}{\phi}(X_t, {Y}_t)rF(X_t, {Y}_t; dr)\\
&\quad\times
\int_{(t+t_0)\wedge T}^Tds\int_{L\pi(X_s, Y_s; \phi)^{-1}
\mbox{e}^{\lambda_1s}}^\infty\frac{b\pi(\phi)}{\phi}(X_s, {Y}_s) u F(X_s, {Y}_s; du)\\
=& 2\sum^K_{j=1}\int_0^{T}dt\int_D(b\phi) (y,j)dy
\int_{L\pi(y,j;\phi)^{-1}\mbox{e}^{\lambda_1t}}^\infty
r\pi(y,j;\phi) F(y,j; dr)\\
& \ \ \ \times
\int_{(t+t_0)\wedge T}^Tds
\int_Dp^{\phi}(s-t,(y, j),(z,k))\frac{b\pi(\phi)}{\phi}(z,k)dz
\int_{L\pi(z,k;\phi)^{-1}\mbox{e}^{\lambda_1s}}^\infty u F(z,k;du).
\end{align*}
By our assumption we have $ \|\int_1^\infty
r\,F(\cdot;dr)\|_{\infty}<\infty$. Since
$L\inf_{(x,i)\in D\times S}\phi(x,j)^{-1}\ge 1$,  we have
$$
III\leq c_1\Pi^{\phi}_{\phi\mu}\xi_T,
$$
for some positive constant $ c_1$ which does not depend on $T$.
Using \eqref{domi-p} and the definition of $n^{\pi(\phi)}$, we get that
$$
\begin{array}{rl}
&\displaystyle
\int_{(t+t_0)\wedge T}^Tds
\int_Dp^{\phi}(s-t,(y,j),(z,k))\frac{b\pi(\phi)}{\phi}(z,k)dz
\int_{L\pi(z,k;\phi)^{-1}\mbox{e}^{\lambda_1s}}^\infty
u F(z,k; du)\\
\le&\displaystyle
2\int_{(t+t_0)\wedge T}^Tds\int_D(b\phi)(z,k)dz\int_{L\phi(z,k;\phi)^{-1}
\mbox{e}^{\lambda_1s}}^\infty \pi(z,k;\phi)u F(z,k; du)\\
\le&\displaystyle
2\int_{t_0}^Tds\int_D(b\phi)(z,k)dz\int_{Le^{\lambda_1s}}^\infty
rF^{\pi(\phi)}(z,k; dr)\\
=&\displaystyle2\sum^K_{k=1}
\int_{t_0}^Tds\int_D(b\phi)(z,k)dz\int_{Le^{\lambda_1s}}^\infty
rF^{\pi(\phi)}(z,k; dr)=2A_T.\end{array}
$$
Then using \eqref{domi-mean},  we have
$$
IV\leq 4 A_T\Pi^{\phi}_{\phi\mu}\xi_T\leq 4 (\Pi^{\phi}_{\phi\mu}
\xi_T)^2.
$$
Combining the estimates above on $III$ and $IV$, we get that there
exists a $c_2>0$ independent of $T$ such that for $T>t_0$,
$$
\Pi_{\phi\mu}^\phi(\xi_T^2)\le 4
(\Pi_{\phi\mu}^\phi(\xi_T))^2+
c_1\Pi_{\phi\mu}^\phi(\xi_T)\le
c_2(\Pi_{\phi\mu}^\phi(\xi_T))^2.
$$
Then we have \eqref{uniform lower bound} with $\widehat c=1/c_2$,
and the proof of the theorem is now complete.\qed
\end{proof}

\begin{definition}\label{con-mart}
Suppose that $(\Omega, {\cal F}, \bP)$ is a probability space,
$\{{\cal F}_t,t\ge 0\}$ is a filtration on $(\Omega, {\cal F})$ and
that ${\cal G}$ is a sub-$\sigma$-field of ${\cal F}$. A real valued
process $U_t$ on $(\Omega, {\cal F}, \bP)$ is called a $\bP(\cdot |\
{\cal G})$-martingale (submartingale, supermartingale resp.) with
respect to $\{{\cal F}_t,t\ge 0\}$ if (i) it is adapted to $\{{\cal
F}_t\vee{\cal G},t\ge 0\}$; (ii) for any $t\geq 0,\
\bE(|U_t|\big|{\cal G})<\infty$ and (iii) for any $t>s$,
$$
\bE(U_t\big|{\cal F}_s\vee{\cal G})=(\ge,\le\mbox{ resp. })\
U_s,\quad{\rm a.s.}
$$
\end{definition}

We need the following  result. For its proof, see \cite[Lemma 3.3]{LRS09}.

\begin{lemma}\label{con-mart-conver}
Suppose that $(\Omega, {\cal F}, \bP)$ is a probability space,
$\{{\cal F}_t,t\ge 0\}$ is a filtration on $(\Omega, {\cal F})$ and
that ${\cal G}$ is a $\sigma$-field of ${\cal F}$. If $U_t$  is a
$\bP(\cdot|\ {\cal G})$-submartingale with respect to $\{{\cal
F}_t,t\ge 0\}$ satisfying
\begin{equation}\label{f-con-exp}
\sup_{t\ge 0} \bE(|U_t|\big|{\cal G})<\infty\quad {\rm a.s.}
\end{equation}
then there exists a finite random variable $U_{\infty}$ such that
$U_t$ converges a.s. to $U_{\infty}$.
\end{lemma}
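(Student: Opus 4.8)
The plan is to prove the lemma by conditioning on $\mathcal{G}$ and reducing it to the classical Doob submartingale convergence theorem. As we may assume in the present paper, $(\Omega,\mathcal{F})$ is a standard Borel space (a path space), so there is a regular conditional probability $\{\bP^\omega:\omega\in\Omega\}$ for $\bP$ given $\mathcal{G}$; then for every $\mathcal{G}$-conditionally integrable random variable $Z$ one has $\bE(Z\mid\mathcal{G})(\omega)=\bE_{\bP^\omega}[Z]$ for $\bP$-a.e.\ $\omega$. Write $\mathcal{H}_t:=\mathcal{F}_t\vee\mathcal{G}$, so that, by Definition \ref{con-mart}, $U$ is $(\mathcal{H}_t)$-adapted, $\bE(|U_t|\mid\mathcal{G})<\infty$ for every $t$, and $\bE[(U_t-U_s)1_A\mid\mathcal{G}]\ge 0$ whenever $s<t$ and $A\in\mathcal{H}_s$.

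First I would fix a countable dense set $T_0\subseteq[0,\infty)$ (say the non-negative rationals) together with a countable algebra $\mathcal{A}$ generating $\bigvee_{s\in T_0}\mathcal{H}_s$. For each pair $s<t$ in $T_0$ and each $A\in\mathcal{A}\cap\mathcal{H}_s$ the inequality $\bE[(U_t-U_s)1_A\mid\mathcal{G}]\ge 0$ holds $\bP$-a.s., and by \eqref{f-con-exp} we have $\sup_{t\in T_0}\bE(|U_t|\mid\mathcal{G})<\infty$ $\bP$-a.s. Intersecting these countably many full-measure events produces a single $\bP$-null set $N$ such that, for every $\omega\notin N$, under $\bP^\omega$ the process $(U_t)_{t\in T_0}$ is integrable, $(\mathcal{H}_t)_{t\in T_0}$-adapted, a genuine submartingale, and bounded in $L^1(\bP^\omega)$.

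Next, for each $\omega\notin N$ the classical Doob submartingale convergence theorem applied under $\bP^\omega$ gives that $\lim_{t\to\infty,\,t\in T_0}U_t$ exists and is finite, $\bP^\omega$-a.s. Setting $U_\infty:=\limsup_{t\to\infty,\,t\in T_0}U_t$, an $\mathcal{F}$-measurable random variable, the event $\{U_t\to U_\infty\text{ along }T_0\}$ has $\bP^\omega$-probability $1$ for every $\omega\notin N$, and integrating over $\omega$ yields $\bP(U_t\to U_\infty\text{ along }T_0)=1$. When $U$ has right-continuous sample paths, as is the case for the submartingales to which this lemma is applied, this upgrades to $\lim_{t\to\infty}U_t=U_\infty$ $\bP$-a.s.; in any case convergence along an increasing sequence $t_n\uparrow\infty$, which is all that is needed later, follows at once.

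\textbf{Main obstacle.} The delicate point is the second step: since $U_t$ need not be $\bP$-integrable (only $\mathcal{G}$-conditionally integrable), the conditional submartingale property and Doob's theorem must be run under the regular conditional probabilities $\bP^\omega$, and one must arrange a single exceptional null set valid for all pairs of times in $T_0$ and all test events in a countable generating class. This is routine once a regular conditional probability is available, which is why the standard Borel reduction is harmless. An alternative that bypasses regular conditional probabilities is to carry out Doob's upcrossing argument directly in the conditional framework — note that $\mathcal{G}\subseteq\mathcal{H}_t$ forces $\bE(U_t^{\pm}\mid\mathcal{H}_t)<\infty$ a.s., so all conditional expectations involved are well defined — obtaining from the submartingale transform by a bounded predictable gambling strategy the conditional upcrossing inequality $(b-a)\,\bE\!\left(U^{[a,b]}_n\mid\mathcal{G}\right)\le \bE(|U_n|\mid\mathcal{G})+|a|$, whence each $U^{[a,b]}_\infty$ is a.s.\ finite and a countable union over rational $a<b$ shows $\liminf_t U_t=\limsup_t U_t\in\R$ a.s.
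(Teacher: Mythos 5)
The paper does not prove this lemma itself; it simply refers the reader to \cite[Lemma 3.3]{LRS09}, so there is no in-text proof to compare against. On its own merits, your argument is essentially sound, and the second route you sketch (a conditional upcrossing inequality) is the standard way this kind of statement is established in the spine-decomposition literature and almost certainly what LRS09 do. A few remarks on the first route. The disintegration idea works, but the countable-algebra bookkeeping as written is slightly off: a single countable algebra $\mathcal{A}$ generating $\bigvee_{s\in T_0}\mathcal{H}_s$ need not have the property that $\mathcal{A}\cap\mathcal{H}_s$ generates $\mathcal{H}_s$ for each fixed $s$ (it could even be trivial). You should instead fix, for each $s\in T_0$, a countable algebra $\mathcal{A}_s$ generating $\mathcal{H}_s$ modulo $\bP$-null sets (available because $L^1(\mathcal{H}_s,\bP)$ is separable in the present path-space setting), and run the argument over the countable family of triples $(s,t,A)$ with $s<t$ in $T_0$ and $A\in\mathcal{A}_s$; for each such triple the inequality $\bE_{\bP^\omega}[(U_t-U_s)1_A]\ge 0$ holds off a $\bP$-null set, since $\bE[(U_t-U_s)1_A\mid\mathcal{G}]\ge 0$ a.s.\ and $\bE[Z\mid\mathcal{G}](\omega)=\bE_{\bP^\omega}[Z]$ a.e. Intersecting these null sets and using the $\bP^\omega$-integrability of $U_s,U_t$ (which is exactly what the hypothesis $\bE(|U_t|\mid\mathcal{G})<\infty$ a.s.\ provides after throwing out one more null set per $t\in T_0$) then lets a monotone-class argument under each $\bP^\omega$ deliver the genuine $(\mathcal{H}_t)_{t\in T_0}$-submartingale property. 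With that repair the first approach is correct. You are also right that without a path-regularity assumption one only gets convergence along $T_0$, but that suffices for the way the lemma is used in the paper, where the conditional submartingale $M_t(\phi)$ is right-continuous. The upcrossing alternative is preferable precisely because it avoids both the regular conditional probability hypothesis and the per-$s$ algebra bookkeeping; fleshing that version out would make the proof self-contained and require nothing beyond $\mathcal{G}\subseteq\mathcal{H}_t$ (so that all conditional expectations in the transform argument are well defined) and the conditional Fatou lemma to get finiteness of the limit from \eqref{f-con-exp}.
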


We are now in the position to prove the main result of this paper.

\medskip

\noindent {\bf Proof of Theorem $\ref{maintheorem}$.}\quad

Recall that, by Proposition \ref{equi-deg}, to prove Theorem
\ref{maintheorem}, we only need to consider the case
$d\mu={\phi}(x,i)dxdi$, where $di$ is the counting measure on $S$.

 We first prove that
if $\sum^K_{i=1}\int_{D}{\phi}(x,i)b(x,i)l(x,i)dx<\infty $, then $W_{\infty}$ is
non-degenerate under $\bP_{\mu}$.
Since $W_{t}(\phi)$ is a nonnegative martingale,
to show it is a closed martingale,
it suffices to prove
$\bP_{\mu}(W_\infty(\phi))=\bP_{\mu}(W_0 (\phi))=  \langle\phi, \mu\rangle$.
Since $W_t^{-1}(\phi)$ is a positive
supermartingale under $\widetilde{\bP}_\mu$, $W_t(\phi)$ converges to
some nonnegative random variable $W_\infty(\phi)\in (0,\ \infty]$
under $\widetilde{\bP}_\mu$.
By \cite[Theorem 5.3.3]{D2}, we only need to
prove that
\begin{equation}\label{to-prove}
\widetilde{\bP}_\mu\left(W_\infty(\phi)<\infty\right)
=1.\end{equation}
By \eqref{decomp},  $(\chi_t, t\ge 0;\widetilde{\bP}_\mu)$ has the same
law as $(\chi_t+\hat \chi_t, t\ge 0; \bP_{\mu,\phi})$, where $\{\chi_t, t\ge 0;\bP_{\mu,\phi})$ is a copy of
$(\chi_t, t\ge 0; \bP_{\mu})$, and $\hat \chi_t=\sum_{s\in(0,t]\cap D_J}\chi^s_t.$ Put
\begin{eqnarray}\label{definition of W_t}
M_t(\phi):=\sum_{s\in(0,t]\cap{\mathcal D_J}}\langle \phi,
\chi^s_t\rangle\ \mbox{e}^{-\lambda_1t}.
\end{eqnarray}
Then
\begin{equation}\label{decomp3}
(W_t(\phi), t\ge 0; \widetilde \bP_{\mu})
= (W_t(\phi)+M_t(\phi), t\ge 0; \bP_{\mu,\phi})\quad \mbox{ in law},
\end{equation}
where $\{W_t(\phi), t\ge 0\}$ is copy of the martingale defined in
\eqref{martingale-1} and is independent of $M_t(\phi)$. Let ${\cal
G}$ be the $\sigma$-field generated by $\{Y_t, m_t,
t\ge 0\}$.  Then, conditional on ${\cal G}$, $(\chi^{s}_t, t\ge
s, \bP_{\mu,\phi})$
has the same law
as $(\chi_{t-s}, t\ge s, \bP_{m_{s}\delta_{\widehat{Y}_{s}}} )$  and
$(\chi^{s}_t, t\ge s, \bP_{\mu,\phi})$ are independent for
$s\in {\mathcal D}_J$. Then we have
\begin{eqnarray}\label{form2-W_t}
M_t(\phi)
 \stackrel{d}{=}\sum_{s\in(0,t]\cap {\mathcal D}_J}
 \mbox{e}^{-\lambda_1 s}
 W_{t-s}^{s}(\phi),
 \end{eqnarray}
where for each $s\in{ \mathcal D_J}$, $W^{s}_t(\phi)$ is
a copy of the martingale defined by \eqref{martingale-1} with
$\mu=m_{s}\delta_{\widehat{Y}_{s}}$, and conditional
on ${\cal G}$, $\{W^{s}_t(\phi),t\ge 0\}$
are independent for $s\in {\mathcal D}_J$. To prove \eqref{to-prove}, by \eqref{decomp3}, it
suffices to show that
$$
\bP_{\mu,\phi}\left(\lim_{t\to\infty}[W_t(\phi)+M_t(\phi)]<\infty\right)
=1.
$$
Since $(W_t(\phi), t\ge 0)$ is a nonnegative martingale under the
probability $\bP_{\mu,\phi}$, it converges $\bP_{\mu,\phi}$ almost
surely to a finite random variable $W_\infty(\phi)$ as $t\to\infty$.
So we only need to prove
\begin{equation}\label{M-finite}
{\bP}_{\mu,\phi}\big(\lim_{t\to\infty}M_t(\phi)<\infty\big)
=1.
\end{equation}
Define ${\cal H}_t:={\cal G}\bigvee\sigma( \chi^{\sigma}_{(s-\sigma)}; \sigma\in[0,t]\cap {\mathcal D_m}, s\in[\sigma, t]).$ Then
$(M_t(\phi))$ is a $\bP_{\mu,\phi}(\cdot|{\cal G})$-nonnegative
submartingale with respect to $({\cal H}_t)$.
By
\eqref{form2-W_t}  and Lemma \ref{lemma1},
\begin{align*}
\sup_{t\geq 0}\bP_{\mu,\phi}\big(M_t(\phi)|{\cal G}\big)&=
\sup_{t\geq 0} \sum_{{s\in [0,\ t]\cap\mathcal
D_J}}\mbox{e}^{-\lambda_1s}m_s\phi({\widehat X}_{s},{\widehat Y}_{s})\\
& \leq
\sum_{{s\in\mathcal
D_J}}\mbox{e}^{-\lambda_1s}m_s\phi({\widehat X}_{s},{\widehat Y}_{s})<\infty,\
\bP_{\mu, \phi}\mbox{-a.s.}
\end{align*}
Then by Lemma \ref{con-mart-conver}, $M_{t}(\phi)$ converges
$\bP_{\mu,\phi}$-a.s. to  $M_{\infty}(\phi)$   as $t\to\infty$ and
$\bP_{\mu,\phi}(M_{\infty}(\phi)<\infty)=1$,
which establishes \eqref{M-finite}.

Now we prove the other direction.  Assume that
$\sum^K_{i=1}\int_D{\phi}(y,i)b(y,i){l(y,i)}dy=\infty$.
We are going to prove that $W_\infty(\phi):=\lim_{t\to\infty}W_t(\phi)$ is degenerate
with respect to $ \bP_{\mu}$.
By \cite[Proposition 2]{HR}, $\frac{1}{W_t(\phi)}$  is a
supermartingale under $\widetilde \bP_{\mu}$, and thus
$1/[M_{t}(\phi)+W_t(\phi)]$ is a nonnegative supermartingales under
$\bP_{\mu,\phi}$.  Recall that $W_{t}(\phi)$ is a nonnegative martingale under $\bP_{\mu,\phi}$. Then the limits
$\lim_{t\to\infty}W_t(\phi)$ and
$1/\lim_{t\to\infty}[M_{t}(\phi)+W_t(\phi)]$ exist and finite
$\bP_{\mu,\phi}$-a.s. Therefore $\lim_{t\to\infty}M_t(\phi)$ exists in
$[0, \infty]$ $\bP_{\mu,\phi}$-a.s. Recall the definition of $(\eta_i,
\sigma_i; i=1,2,\cdots)$ in Lemma \ref{lemma1}, and note that
$\lim_{i\to\infty}\sigma_i=\infty$. By Lemma \ref{lemma1},
$$
\limsup_{t\rightarrow\infty}M_t(\phi)\geq\limsup_{i\rightarrow\infty
}M_{\sigma_i}(\phi) \geq\limsup_{i\rightarrow\infty}
\mbox{e}^{-\lambda_1\sigma_i}\eta_i\phi(\widehat X_{\sigma_i},
\widehat Y_{\sigma_i})=\infty\quad \bP_{\mu,\phi}\mbox{-a.s.}
$$
So we have
$$
\lim_{t\rightarrow\infty}M_t(\phi)=\infty \quad \bP_{\mu,\phi}\mbox{-a.s.}
$$
By \eqref{decomp3},
$$
\widetilde{\bP}_{\mu}(W_{\infty}(\phi)=\infty)=1.
$$
It follows from  \cite[Theorem 5.3.3]{D2} that
$\bP_{\mu}(W_{\infty}=0)=1$.\qed

\section{Appendix: Non-local Feynman-Kac transform}

In this Appendix, we establish a result on time-dependent non-local Feynman-Kac transform, which has been used in the proof of Theorem \ref{theorem1}.

\medskip

Let $E$ be a Lusin space and ${\cal B}(E)$ be the Borel $\sigma$-field on $E$,
and let $m$ be a $\sigma$-finite measure on ${\cal B}(E)$ with ${\rm supp} [m] = E$.
Let $\{\xi_t, t\ge  0; \Pi_{x}\}$  be
an $m$-symmetric Borel standard process on $E$
with L\'{e}vy system  $(J, t)$,
 where $J(x, dy)$ is a kernel from $(E, {\cal B}(E))$ to $(E\cup\{\partial\}, {\cal B}(E\cup\{\partial\}))$.

\begin{lemma}\label{G-nonlocal-FK}
Suppose that $\{\xi_t, t\ge  0; \Pi_{x}\}$ is
an $m$-symmetric Borel standard process on $E$ with
 L\'{e}vy system  $(J, t)$.
Assume that $q$ is a locally bounded function on $[0,\infty)\times E$
and that $F$ is a non-positive,  ${\cal B}([0,\infty)\times E\times E)$-measurable
 function vanishing on the diagonal  of $E\times E$
so that
for any $x\in E$,
\begin{equation}\label{e:6.1}
\sum_{0< s\le t}F(t-s,\, \xi_{s-},\, \xi_s)   > -\infty \quad \hbox{for every } t>0
\quad \Pi_x\mbox{-a.s.}
\end{equation} and
\begin{equation}\label{e:6.2}
\sup_{x\in E} \Pi_x \left[ \int_0^t \int_{E_\partial}(1-e^{F(t-s,\,\xi_{s},\, y)})J(\xi_s, dy) ds \right]
<\infty \quad \hbox{for every } t>0.
\end{equation}
 For any $x\in E$, $t\ge 0$ and $f\in B^+_b(E)$, define
\begin{equation}\label{non-local-FK}
h(t,x):=\Pi_{x}\left[e^{\int^t_0q(t-s, \xi_s)ds+\sum_{0< s\le t}F(t-s,\, \xi_{s-},\, \xi_s)}f(\xi_t)\right].\end{equation}
Then $h$ is the unique locally bounded positive solution of the following integral equation
\begin{align}\label{inteq-nonlocal-FK}h(t,x)=&\Pi_xf(\xi_t)+\Pi_x\int^t_0q(t-s,\xi_s)h(t-s, \xi_s)ds\nonumber\\
&+\Pi_{x}\left[\int^t_0\int_E(e^{F(t-s,\,\xi_{s},\, y)}-1)h(t-s, y)J(\xi_s, dy)ds\right].
\end{align}
\end{lemma}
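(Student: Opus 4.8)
The plan is to establish \eqref{inteq-nonlocal-FK} by first treating the purely multiplicative (continuous) part and then incorporating the non-local jump part via the L\'evy system formula, and finally to prove uniqueness by a Gronwall-type argument. The basic tool is the Markov property together with the exponential identity
$$
e^{\int_0^t q(t-s,\xi_s)ds+\sum_{0<s\le t}F(t-s,\xi_{s-},\xi_s)}
= 1 + \int_0^t e^{\int_r^t q(t-s,\xi_s)ds + \sum_{r<s\le t}F(t-s,\xi_{s-},\xi_s)}\, d\!\left(\int_0^r q(t-u,\xi_u)du + \sum_{0<u\le r}F(t-u,\xi_{u-},\xi_u)\right),
$$
which one obtains by writing the exponential of a sum of a bounded-variation continuous piece and a pure-jump piece as a Stieltjes integral. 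First I would justify that the exponential is integrable and $h$ is locally bounded: since $q$ is locally bounded and $F\le 0$, we have $e^{\int_0^t q(t-s,\xi_s)ds+\sum F}\le e^{t\|q\|_{\infty,[0,t]}}$, so $0\le h(t,x)\le e^{t\|q\|_{\infty,[0,t]}}\|f\|_\infty$; local boundedness follows. Condition \eqref{e:6.1} guarantees the jump sum is a.s.\ finite so the exponent is well defined, and \eqref{e:6.2} will be what makes the jump term in \eqref{inteq-nonlocal-FK} absolutely convergent.

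The main step is the derivation of the integral equation. Applying the identity above, taking $\Pi_x$-expectation, and using Fubini (justified by the uniform bound on the exponential and \eqref{e:6.2}), the $dr$-integral splits into a contribution from the continuous part $d(\int_0^r q(t-u,\xi_u)du)=q(t-r,\xi_r)dr$ and a contribution from the jumps $d(\sum_{0<u\le r}F(t-u,\xi_{u-},\xi_u))$. For the continuous part, conditioning at time $r$ and using the Markov property identifies $\Pi_x[\,\cdot\mid\mathcal F_r\,] = h(t-r,\xi_r)$ after the shift, producing the term $\Pi_x\int_0^t q(t-s,\xi_s)h(t-s,\xi_s)ds$. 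For the jump part, I would write $\sum_{0<u\le r}\big(e^{F(t-u,\xi_{u-},\xi_u)}-1\big)$-type increments (the extra $-1$ coming from the telescoping of the exponential across a jump), then use the L\'evy system formula: for a non-negative predictable integrand $g$,
$$
\Pi_x\!\!\sum_{0<u\le t} g(u,\xi_{u-},\xi_u) = \Pi_x\!\!\int_0^t\!\!\int_{E_\partial} g(u,\xi_u,y)\,J(\xi_u,dy)\,du,
$$
together with the Markov property after the jump, to convert the sum into $\Pi_x\big[\int_0^t\int_E (e^{F(t-s,\xi_s,y)}-1)h(t-s,y)J(\xi_s,dy)ds\big]$. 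Adding the three pieces — the constant term $\Pi_x f(\xi_t)$ from the ``$1$'' in the exponential identity, the $q$-term, and the jump term — yields \eqref{inteq-nonlocal-FK}. The term vanishing on the diagonal and condition \eqref{e:6.2} ensure the jump integral is finite, so all manipulations are legitimate.

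For uniqueness: suppose $h_1,h_2$ are two locally bounded positive solutions of \eqref{inteq-nonlocal-FK} and set $g=h_1-h_2$. Subtracting the equations, $g(t,x)=\Pi_x\int_0^t q(t-s,\xi_s)g(t-s,\xi_s)ds+\Pi_x\int_0^t\int_E(e^{F(t-s,\xi_s,y)}-1)g(t-s,y)J(\xi_s,dy)ds$. Fix $T>0$ and let $G(t):=\sup_{s\le t}\sup_{x\in E}|g(s,x)|$, which is finite by local boundedness. Using $|q|\le c_T$ on $[0,T]$, $|e^{F}-1|\le 1$, and \eqref{e:6.2} to bound $\sup_x\Pi_x[\int_0^t\int_{E_\partial}(1-e^{F(t-s,\xi_s,y)})J(\xi_s,dy)ds]\le C_T$ for $t\le T$, one gets $|g(t,x)|\le c_T\int_0^t G(s)ds + C_T\,G(t)$? — this needs care since the jump bound is not obviously of convolution form. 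Instead I would iterate: substituting the equation into itself $n$ times and using \eqref{e:6.2} to control the jump kernel at each stage produces a bound $G(t)\le (\text{something like } (c_T t + C_T)^n/n!\cdot G(T))$ after $n$ iterations — more precisely one tracks the continuous factor $c_T$ getting a factor $t/k$ at step $k$, while iterating the bounded jump operator $n$ times gives a geometric factor; a cleaner route is a Gronwall argument on $\widetilde G(t):=\sup_x\Pi_x[e^{\int_0^t|q|}\cdot(\ldots)]$. \textbf{The main obstacle} I anticipate is precisely this uniqueness step: making the Gronwall/iteration estimate work when the jump term is not a simple time convolution but a general L\'evy-kernel operator; the key observation that saves it is that \eqref{e:6.2} gives a uniform-in-$x$ bound on the total jump intensity (weighted by $1-e^F$) on each bounded time interval, which lets one treat the jump contribution as a bounded perturbation and close the estimate. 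Existence of a solution is already delivered by the representation \eqref{non-local-FK}, so uniqueness is the only real content remaining.
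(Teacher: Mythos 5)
Your derivation of the integral equation \eqref{inteq-nonlocal-FK} matches the paper's argument in substance: the paper defines $A_{s,t}=\int_s^t q(t-r,\xi_r)\,dr+\sum_{s<r\le t}F(t-r,\xi_{r-},\xi_r)$ and writes $e^{A_{0,t}}-1$ as a Stieltjes integral in $s$ of $e^{A_{s,t}}$, separating the absolutely continuous part (yielding the $q$-term) from the pure-jump part (yielding $\sum_{0<s\le t}e^{A_{s,t}}(e^{F}-1)$), then applies the Markov property at each $s$ to replace the forward-looking factor $e^{A_{s,t}}f(\xi_t)$ by $h(t-s,\xi_s)$, which makes the jump-summand predictable, and finally invokes the L\'evy system formula. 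Your proposal reproduces exactly this sequence of steps, including the point that one must condition on $\mathcal F_s$ before applying the L\'evy system (otherwise the integrand $e^{A_{s,t}}(e^{F}-1)$ is not predictable since $A_{s,t}$ looks forward). The hypotheses \eqref{e:6.1} and \eqref{e:6.2} are used exactly as you indicate to give local boundedness of $h$ and absolute convergence of the jump integral.

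Where you and the paper diverge is uniqueness, and there is a genuine gap in your sketch. The paper does not attempt a Gronwall/iteration argument at all; it simply invokes \cite[Proposition~2.15]{Li}. Your proposed iteration does not close under the stated hypotheses: writing $L_J g(t,x)=\Pi_x\int_0^t\int_E(e^{F}-1)g(t-s,y)J(\xi_s,dy)\,ds$ and $\kappa(t):=\sup_x\Pi_x\int_0^t\int_E(1-e^{F(t-s,\xi_s,y)})J(\xi_s,dy)\,ds$, condition \eqref{e:6.2} only gives $\kappa(T)<\infty$; it does not force $\kappa(t)\to 0$ as $t\downarrow 0$ (the L\'evy kernel $J(x,\cdot)$ may have infinite total mass, and $\kappa$ need not vanish at $0$). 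Thus the best a priori bound from iterating the jump operator alone is $\|L_J^n g\|_{[0,T]}\le\kappa(T)^n\|g\|_{[0,T]}$, which does not tend to zero when $\kappa(T)\ge 1$; the factorial decay you mention comes only from iterating the $q$-term, and mixed compositions of $L_q$ and $L_J$ do not obviously inherit it. The remark in your proposal that the uniform-in-$x$ bound on the jump intensity ``lets one treat the jump contribution as a bounded perturbation and close the estimate'' is precisely the step that does not go through: a bounded perturbation of norm $\ge 1$ is not a contraction. To complete the uniqueness part one would need either an extra hypothesis forcing $\kappa(t)\to 0$ as $t\to 0$, or a genuinely different argument such as the one the paper outsources to Li.
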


\begin{proof}
Note that under the locally boundedness assumption of $q(t, x)$ and \eqref{e:6.1}, the function $h$
of \eqref{non-local-FK} is
 well defined
and positive, and there exists $c>0$ such that
$$
h(t, x)\le e^{ct}\Pi_x[f(\xi_t)].
$$
Thus $h(t,x)$ is bounded on $[0,T]\times E$ for any $T>0$.
The assumption \eqref{e:6.2} implies that the last term  of \eqref{inteq-nonlocal-FK}
is absolutely convergent and defines a bounded function on $[0, T]\times E$ for every $T>0$.
For $s\le t$,  define
$$
A_{s,t}=\int^t_sq(t-r, \xi_r)dr+\sum_{s<r\le t}F(t-r, \xi_{r-},\xi_r),
$$
which is right continuous and has left limits as a function of $s$. Note that
\begin{eqnarray*}
e^{A_{0,t}}-1
&=& - \left( e^{A_{t,t}}-e^{A_{0,t}} \right) \\
&=&\int^t_0e^{A_{s-}, t}q(t-s, \xi_s)ds-\sum_{0<s\le t} \left( e^{A_{s,t}}-e^{A_{s-, t}} \right)\\
&=&\int^t_0 e^{A_{s, t}} q(t-s,\xi_s)ds+\sum_{0<s\le t} e^{A_{s,t}} \left( e^{F(t-s,\,\xi_{s-},\, \xi_s))}-1 \right).
\end{eqnarray*}
Hence we have
\begin{align*}
&\Pi_{x}\left[\left(e^{A_{0,t}}-1\right)f(\xi_t)\right]\\
=&\Pi_x\left[\int^t_0e^{A_{s, t}}q(t-s,\xi_s)f(\xi_t)ds\right]+\Pi_{x}\left[\sum_{0<s\le t}
e^{A_{s,t}}
\left( e^{F(t-s,\,\xi_{s-},\, \xi_s))}-1 \right) f(\xi_t)\right].
\end{align*}
By the Markov property of $\xi$ and the fact that
\begin{eqnarray*}
A_{s,t} =
\left( \int^{t-s}_0 q(t-s-r,\xi_r)dr+
\sum_{0< r\le t-s}
F(t-s-r,\, \xi_{r-},\, \xi_r ) \right)
\circ\theta_s ,
\end{eqnarray*}
we have
\begin{align*}
& h(t,x)\\
&= \Pi_xf(\xi_t)+\Pi_x\left[\int^t_0q(t-s,\xi_s)\Pi_{\xi_s}\left(e^{\int^{t-s}_0q(t-s-r,\xi_r)dr+
\sum_{0< r\le t-s}
F(t-s-r,\, \xi_{r-},\, \xi_r)}f(\xi_{t-s})\right)\right]\\
&\quad+\Pi_{x}\left[\sum_{0<s\le t} \left(e^{F(t-s,\,\xi_{s-},\, \xi_s))}-1 \right)
\Pi_{\xi_s}\left[e^{\int^{t-s}_0q(t-s-r, \xi_r)dr+
\sum_{0< r\le t-s}
F(t-s-r,\, \xi_{r-},\, \xi_r)}f(\xi_{t-s})\right]\right].
\\
&=\Pi_xf(\xi_t)+\Pi_x\int^t_0q(t-s,\xi_s)h(t-s, \xi_s)ds+\Pi_{x}\left[\sum_{0<s\le t}
\left( e^{F(t-s,\,\xi_{s-},\, \xi_s))}-1 \right) h(t-s, \xi_s)\right]
\\
&=\Pi_xf(\xi_t)+\Pi_x\int^t_0
q(t-s, \xi_s)
h(t-s, \xi_s)ds\\
&\quad+\Pi_{x}\left[\int^t_0\int_E \left( e^{F(t-s,\,\xi_{s},\, y)}-1 \right) h(t-s, z)
J(\xi_s, dy) ds\right].
\end{align*}
Thus $h(t,x)$ defined by \eqref{non-local-FK} is a locally bounded positive solution of \eqref{inteq-nonlocal-FK}.

It follows from \cite[Proposition 2.15]{Li} that
\eqref{inteq-nonlocal-FK} has a unique locally bounded positive  solution.
\qed
\end{proof}

\begin{remark}\label{rem6.2} \rm
\begin{description}
\item{(i)}  Lemma \ref{G-nonlocal-FK} can be easily extended to signed $F$ (with the same argument)
by replacing condition \eqref{e:6.1}-\eqref{e:6.2} by
$$
\sum_{0< s\le t}F^-(t-s,\, \xi_{s-},\, \xi_s)  < \infty \quad \hbox{for every } t>0
\quad \Pi_x\mbox{-a.s.} \eqno (6.1')
$$
 and
$$
\sup_{x\in E} \Pi_x \left[ \int_0^t \int_{E_\partial} \left| 1-e^{F(t-s,\,\xi_{s},\, y)} \right| (\xi_s, dy) ds \right]
<\infty \quad \hbox{for every } t>0.  \eqno (6.2')
$$

\medskip

\item{(ii)} If $F$ does not depend on $t$, the above result
follows easily from the results of \cite{CS03a}.

\item{(iii)} If $\sup_{x\in E}J(x, E\cup\{\partial\})<\infty$, or if
\begin{equation}
\sup_{x\in E} \Pi_x \left[ \int_0^t \int_{E_\partial} |F(t-s, \xi_s, y)| J(\xi_s, dy) ds \right]
<\infty \quad \hbox{for every } t>0,
\end{equation}
then conditions \eqref{e:6.1} and \eqref{e:6.2} are satisfied.
\end{description}
\end{remark}

\bigskip

\begin{singlespace}
\small

\end{singlespace}
\end{doublespace}
\vskip 0.3truein

 {\bf Zhen-Qing Chen}

Department of Mathematics, University of Washington, Seattle,
WA 98195, USA

E-mail: zqchen@uw.edu

\bigskip
{\bf Yan-Xia Ren}

LMAM School of Mathematical Sciences \& Center for
Statistical Science,
Peking
University,
Beijing, 100871, P.R. China.

E-mail: yxren@math.pku.edu.cn
\bigskip

 {\bf Renming Song:}

  Department of Mathematics,
 The University of Illinois,  Urbana, IL 61801 U.S.A.,

  E-mail: {\tt rsong@illinois.edu}
 \bigskip
\end{document}